\newtheorem{theorem}{Theorem}[section]
\newtheorem{lemma}[theorem]{Lemma}
\newtheorem{proposition}[theorem]{Proposition}
\newtheorem{definition}[theorem]{Definition}
\def \Rn {{\mathbb R}^n}
\def \Rone {{\mathbb R}}
\def \A {\mathcal{A}}
\def \tA {\tilde{\mathcal{A}}}
\def \D {{\mathcal D}}
\def \L {{\mathcal L}}
\def \J {{\mathcal J}}
\def \P {{\mathcal P}}
\def \p {\partial}
\def \st {\::\:}
\DeclareMathOperator{\diam}{diam}
\newcommand{\geov}[2]{\vec{\gamma}_{(#1)}(#2)}
\newcommand{\geo}[2]{\gamma_{(#1)}(#2)}
\newcommand{\dgeo}[2]{\dot\gamma_{(#1)}(#2)}
\newcommand{\ip}[1]{\langle #1\rangle}
\newcommand{\pp}[1]{\frac{\p}{\p#1}}
\newcommand{\eps}{\varepsilon}
\newcommand{\norm}{\|}
\newcommand{\tF}{\tilde{F}}
\newcommand{\ta}{\tilde{a}}
\newcommand{\tk}{\tilde{k}}
\begin{document}
\title{Stability of the Gauge Equivalent Classes in Inverse Stationary
Transport in Refractive Media}
\thanks{AMS Subject Classification: 35R30,78A46}

\author{Stephen McDowall}
\thanks{First author partly supported by  NSF Grant No.~0553223}
\address{\hskip-\parindent
Stephen McDowall\\Department of Mathematics\\
Western Washington University\\
516 High Street\\ Bellingham, WA 98225-9063}
\email{stephen.mcdowall@wwu.edu}
\author{Plamen Stefanov}
\thanks{Second author partly supported by  NSF Grant No.~0554065}
\address{\hskip-\parindent
Plamen Stefanov\\Department of Mathematics\\
Purdue University\\
150 N. University Street\\
West Lafayette, IN 47907-2067} \email{stefanov@math.purdue.edu}
\author{Alexandru Tamasan}
\thanks{Third author partly supported by  NSF Grant No.~0905799}
\address{\hskip-\parindent
Alexandru Tamasan\\
Department of Mathematics\\
University of Central Florida\\
4000 Central Florida Blvd.\\Orlando, FL, 32816, USA}
\email{tamasan@math.ucf.edu}

\begin{abstract}In the inverse stationary transport problem through
anisotropic attenuating, scattering, and refractive media, the
albedo operator stably determines the gauge equivalent class of the
attenuation and scattering coefficients.
\end{abstract}

\maketitle

\pagestyle{myheadings}
\markboth{S. McDowall, P. Stefanov and A. Tamasan}{Stability of the
gauge equivalent classes}

\section{Introduction}
\label{sec:intro}

This paper concerns the problem of recovering the absorption and
scattering properties of a refractive medium from boundary knowledge
of the albedo operator. The medium $M\subset\Rn$, $n\geq 2$, is a
bounded domain with smooth boundary, endowed with a known Riemannian
metric $g$. The free moving particles travel through $M$ along the
geodesics. In the stationary case the propagation of particles is
modeled by the linear transport equation
\begin{align}
  \label{TE}
  -\D u(x,v)-a(x,v)u(x,v)+\int_{S_xM}k(x,v',v)u(x,v')\:d\omega_x(v')=0.
\end{align}

In the equation above $u(x,v)$ denotes the density of particles at
position $x$ with velocity $v$ in $S_xM$, the unit tangent sphere at
$x$. The operator $\D$ is the derivative along the geodesic flow:
For a given point $(x,v)\in S_x M$, if $\geo{x,v}{\cdot}$ denotes
the geodesic starting at $\geo{x,v}{0}=x$ with initial velocity
$\dgeo{x,v}{0}=v$, then
\begin{align}
  \label{geodesic flow}
  \D u(x,v):=\pp{t}\Bigr|_{t=0}u(\geov{x,v}{t}),
\end{align}
where, for brevity, we use the notation
$\geov{x,v}{t}=(\geo{x,v}{t},\dgeo{x,v}{t}).$ If $g$ is Euclidean then $\D$
is the directional derivative: $\D u(x,v)=v\cdot\nabla_xu(x,v)$. The
measure $d\omega_x(v')$ in \eqref{TE} is the volume form on $S_xM$ induced
from the volume form on $T_xM$ (the tangent space to $M$ at $x$) determined
by $g$ at $x$. The resulting (Liouville) form on $S M$ is preserved under
the geodesic flow of $g$, see \cite{Sh94}. The attenuation coefficient
$a(x,v)$ in \eqref{TE} quantifies the rate at which particles are lost from
the point $(x,v)$ in phase space due to absorption and scattering into new
directions.  The scattering coefficient $k(x,v',v)$ represents the
probability that a particle at position $x$ with velocity $v'\in S_xM$ will
scatter to have new velocity $v\in S_xM$.

The boundary measurements are described by the albedo operator $\A$: Let
$\Gamma_{\pm}=\p_\pm SM=\{(x,v)\in \p SM\st\pm\ip{v,\nu_x}>0\}$ denote the
``incoming'' and ``outgoing'' bundles, where $\nu_x$ is the unit outer
normal vector to the boundary $\p M$ at $x$ and $\ip{\cdot,\cdot}$ is the
inner product, each with respect to $g$ at $x$.  The medium is probed with
the given radiation
\begin{align}
  \label{BC}
  u|_{\Gamma_-}=u_-,
\end{align}
and the exiting radiation is detected on $\Gamma_+$. The albedo
operator takes the incoming flux to the outgoing flux at the
boundary: $\A u_-=u|_{\Gamma_+}$.

The inverse boundary value problem is to determine the coefficients
$a$ and $k$ from the knowledge of $\A$.

When the attenuation $a$ is isotropic ($v$-independent), there is a large
collection of uniqueness results under varying assumptions on the
parameters; see \cite{Ba09} for a comprehensive account, and \cite{Kui10}
for a recent survey of numerical methods. The works below are based on the
singular decomposition of the Schwartz kernel of $\A$, an idea first
introduced in \cite{ChSt96} and \cite{ChSt99}; see also \cite{Bo}. In the
Euclidean setting, uniqueness of $a(x)$ (dimensions two and above) and
$k(x,v',v)$ (dimensions three and above) was proven in \cite{ChSt99} under
some minimal restrictions guaranteeing only that the forward problem is
well-posed. For sufficiently small $k$ this result was extended to
dimension two in \cite{StUh03}; see also \cite{Ta02,Ta03}. Stability
results are proven in \cite{StUh03,La08,Ro66,Wa99} with the most general
result (in Euclidean geometry) in \cite{BaJo08}. When no angular resolution
is measured in the outgoing flux, the singular decomposition of the new
boundary operator has been used to recover an isotropic coefficient $a$ and
the spatial part of $k$ in \cite{La08, BaLaMo08}.

The case of a Euclidean metric corresponds to transport in
materials with a constant index of refraction.  If the index of
refraction is isotropic, but varying, then \eqref{TE} can be derived
as a limiting case of Maxwell's equations with non-constant (but
isotropic) permeability, resulting in a metric which is conformal to
the Euclidean metric (\cite{Ba06}). For a general metric, we
consider \eqref{TE} as a model for transport in a medium with
varying, anisotropic index of refraction. When the attenuation is
assumed isotropic, uniqueness results in Euclidean geometry are
extended to the Riemannian metric setting in
\cite{LaMc08,Mc04,Mc05,Mc08}. There and here, the manifold is
assumed to be {\em simple} as follows.

{\bf Definition:} {\it $(M,g)$ is called {\em simple} if it is
strictly convex, and for any $x\in \overline M$ the exponential map
$\exp_x:\exp_x^{-1}(\overline M)\to\overline M$ is a diffeomorphism.
If $M$ is two dimensional we have the following additional
assumption: Let $\kappa$ be the maximum sectional curvature of $M$.
If $\kappa>0$, then we also assume $\diam(M)<\pi/\sqrt{\kappa}$.}

The works mentioned above concern the media with an isotropic
attenuation character. However, since the attenuation is a
combination of absorption and loss of particles due to scattering:
$a(x,v)=\sigma(x,v)+\int_{S_xM}k(x,v,v')\:d\omega_x(v')$, even when
the absorption part is isotropic ($\sigma=\sigma(x)$), if $k$
depends on two independent directions the resulting attenuation is
anisotropic. Evidence of anisotropy in biological tissue has been
observed experimentally, see \cite{Wang98}.

When the attenuation coefficient is anisotropic, it is possible to
have media of differing attenuation and scattering properties which
yield the same albedo operator. Moreover the non-uniqueness is
characterized by the action of a gauge transformation \cite{StTa09}:
see \eqref{gauge} below. The same algebraic structure of
non-uniqueness is valid in refractive media \cite{McStTa10-GaEq}.

In Theorems \ref{main_thm} and \ref{main_thm_2d} we show the
stability of the gauge equivalent classes occurring in refractive
media, thus extending the results from the Euclidean case in
\cite{McStTa10-GaSt}. We also generalize a stability result in
\cite{BaJo08} from Euclidean to Riemannian geometry.

The algebraic structure of non-uniqueness in
\cite{StTa09,McStTa10-GaEq} can be readily observed. Indeed, if
$\phi\in L^\infty(S M)$ is positive with $1/\phi\in L^\infty(S M)$,
$\D\phi\in L^\infty(S M)$ and such that $\phi=1$ on $\p SM$. Set
\begin{align}
  \label{gauge}
  \tilde a(x,v)=a(x,v)-\D\log\phi(x,v),\quad
  \tilde k(x,v',v)=\frac{k(x,v',v)\phi(x,v)}{\phi(x,v')}.
\end{align}
Then $u$ satisfies \eqref{TE} if and only if $\tilde{u}=\phi u$
solves
\begin{align*}
  -\D\tilde u(x,v)-\tilde a(x,v)\tilde u(x,v)
  +\int_{S_xM}\tilde k(x,v',v)\tilde u(x,v')\:d\omega_x(v')=0.
\end{align*}
Since $\phi=1$ on $\Gamma$, $u=\tilde u$ there, so the albedo
operator $\A$ for the parameters $(a,k)$ is indistinguishable from
the albedo operator $\tilde \A$ for the pair $(\tilde a,\tilde k)$,
i.e. $\A=\tilde \A$. This motivates the following definition in
\cite{StTa09,McStTa10-GaEq}.

\begin{definition}\label{gauge_equivalence}
Two pairs of coefficients $(a,k)$ and $(\tilde{a},\tilde{k})$ are
called
  {\em gauge equivalent} if there exists a positive map $\phi\in L^\infty(S
  M)$ with $1/\phi\in L^\infty(S M)$, $\D\phi\in L^\infty(S M)$, and
  $\phi=1$ on $\Gamma$, such that \eqref{gauge} holds. We denote this
  equivalence by $(a,k)\sim (\tilde{a},\tilde{k})$.
\end{definition}

The relation defined above is reflexive since $(a,k)\sim (a,k)$ via
$\phi\equiv 1$; it is symmetric since $(a,k)\sim
(\tilde{a},\tilde{k})$ via $\phi$ yields $(\tilde{a},\tilde{k})\sim
(a,k)$ via $1/\phi$; and it is transitive since if $(a,k)\sim
(\tilde{a},\tilde{k})$ via $\phi$ and $(\tilde{a},\tilde{k})\sim
(a',k')$ via $\tilde{\phi}$ then $(a,k)\sim (a',k')$ via
$\phi\tilde{\phi}$. Therefore one has the multiplicative group of
gauges acting transitively (since any equivalent pair are related by
some gauge $\phi$ \cite{StTa09,McStTa10-GaEq}) on the equivalent
class of a pair of coefficients. We denote the equivalence class of
$(a,k)$ by $\langle a,k\rangle$.

\section{Transport of the data to a larger domain}\label{extension}
Due to the method of proof, the total travel time of each particle
in $M$ has to be uniformly bounded away from zero. This can be done
without loss of generality by doing the measurements away from the
boundary $\partial M$. More precisely, let $M_0$ be a slightly
larger domain strictly containing $M$. The metric $g$ can be
extended to $g_0$ on $M_0$ in such a way that $(M_0,g_0)$ still
remains simple \cite{StUh04}.

As in \cite{McStTa10-GaSt}, we reduce the problem in $M$ to one in
$M_0$: Let $(a,k)$ and $(\ta,\tk)$ be coefficients for which the
forward problems in $(M,g)$ are well-posed, and $\A$ and $\tA$ be
their corresponding albedo operators.  Defining $a=\ta=k=\tk=0$ in
$M_0\setminus M$, the forward problems in $(M_0,g_0)$ are also
well-posed and the albedo operators $\A_0$ and $\tA_0$ are well
defined maps between functions on
\begin{align*}
  \Gamma_\pm^0:=\p_\pm SM_0=\{(x,v)\in \p
  SM_0\st\pm\ip{v,\nu_x}>0\}
\end{align*}
(now $\nu_x$ is the outer unit normal vector to $\p M_0$ at $x$,
with respect to the extended metric $g_0$).  As in
\cite{McStTa10-GaSt}, when the two forward problems for $M$ are
well-posed in $L^p$, $1\leq p\leq \infty$, the following isometric
property holds:
\begin{align}
  \label{isometry}
  \|\A-\tA\|_{\L(L^p(\Gamma_-,d\mu);L^p(\Gamma_+,d\mu))}
  =\|\A_0-\tA_0\|_{\L(L^p(\Gamma_-^0,d\mu_0);L^p(\Gamma_+^0,d\mu_0))}.
\end{align}The measure $d\mu$ (and, analogously, $d\mu_0$) in \eqref{isometry}
is defined as follows: Let $d\Sigma^{2n-2}$ be the volume form on
$\Gamma_{\pm}$ obtained by the natural restriction of the volume
form on $SM$ to $\Gamma_\pm$. Then, by extending $d\Sigma^{2n-2}$ as
a homogeneous form of order $n-1$  in $|v'|$, we have that
$d|v'|\:d\Sigma^{2n-2}(x',v')$ coincides with the volume form on
$SM$; see \cite{Sh94} for details. We define
\begin{align}\label{dmu}
  d\mu(x',v')=|\ip{v',\nu_{x'}}|\:d\Sigma^{n-2}(x',v').
\end{align}
{\bf Remark:} The proof of \eqref{isometry} is essentially identical to
that in the Euclidean case as presented in \cite{McStTa10-GaSt} due to the
following invariant property of the form $d\mu$: Fix $(x'_0,v'_0)\in
\Gamma_\pm$. Let $\partial\tilde\Omega$ be any surface so that the geodesic
issued from $(x_0',v_0')$ hits it transversally. Then the geodesic flow
defines a natural local ``projection'' near $(x'_0,v'_0)$, of $\Gamma_\pm$
onto $\tilde\Gamma_\pm$. Let $d\tilde \mu$ be the measure on
$\tilde\Gamma_\pm$ defined analogously to \eqref{dmu}. Then the pull back
of $d\tilde \mu$ is $d\mu$ (see the proof of Lemma 4.2.2 in \cite{Sh94}).
Given $(x,v)\in SM$, we define the ``forward/backward travel time to the
boundary'' functions
\begin{align*}
  \tau_\pm(x,v)=\min\{t\geq0\st\geo{x,v}{\pm t}\in\p M_0\},
\end{align*}
and let $\tau(x,v)=\tau_+(x,v) +\tau_-(x,v)$ be the total travel
time of the free particle (of velocity $v$ at $x$) through $M_0$.
Since $\text{dist}_{g_0}(\overline{M},\p M_0)>0$, we have
\begin{align}
  \label{nontangential}
  c_0:=\inf\{\tau(x,v)\st (x,v)\in\overline{SM}\}>0.
\end{align}

Using the isometry property \eqref{isometry}, we can considered the
inverse problem in the larger domain $(M_0,g_0)$ with the albedo
operators now acting between $\Gamma_\pm^0$. Equivalently, for the
original problem in $(M,g)$ we may work without loss of generality
with coefficients $a,k$ of (a priori fixed) compact support in $M$.

To simplify notation, while still working in the larger domain, we
drop the $0$ index throughout the remaining of the paper: thus $M_0$
becomes $M$, $\Gamma_\pm^0$ becomes $\Gamma_\pm$, etc.

\section{The singular structure of the albedo operator's kernel}
\label{preliminaries} In this section we recall the singular
decomposition of the Schwartz kernel of the albedo operator for the
two cases separated by dimension.

We work within the class of {\em admissible} coefficients: For
$n\geq 3$
\begin{align}\label{continuous_coeff}
  (a,k)\in L^\infty(SM)\times L^\infty(SM,L^1(S_xM)),
\end{align}and for $n=2$
\begin{align}\label{bounded_coeff_2d}
  (a,k)\in L^\infty(SM)\times L^\infty(S^2M),
\end{align}where
$S^2M:=\{(x,v',v)\st x\in M,\ v',v\in S_xM\}.$ Note that the gauge
transformations \eqref{gauge} preserve the admissible classes in
\eqref{continuous_coeff} and \eqref{bounded_coeff_2d}.

Moreover, either one of the following {\em subcritical} conditions
that yield well-posedness for the boundary value problem \eqref{TE}
and \eqref{BC} is assumed to hold:
\begin{equation}
  \label{critic_CS}
  \text{ess sup}_{(x,v)\in SM}
  \left|\tau(x,v)\int_{S_xM}k(x,v,v')\:d\omega_x(v')\right|<1,
\end{equation}
or
\begin{equation}
  \label{critic_DL}
  a(x,v)-\int_{S_xM}k(x,v,v')\:d\omega_x(v')\geq0,
  \quad a.e.\ (x,v)\in SM;
\end{equation}
see, e.g., \cite{BaJo08,ChSt99, DaLi93, MoKh97, ReSi79}.

The right hand side of \eqref{TE} defines a closed, unbounded operator on
$L^1(SM_R)$ with the domain $\{u\in L^1(SM_R)\st \D u\in L^1(SM_R),\;
u|_{\Gamma_-}=0\}$; see \cite{ChSt99,McStTa10-GaEq}.

Proposition \ref{albedo kernel 3} below, which describes the terms
in the expansion of the kernel of $\A$, is proven in \cite{Mc04},
the Euclidean equivalent appearing in \cite{ChSt99}.  We denote by
$\delta_{\{x',v'\}}(x,v)$ the delta-distribution on $\Gamma_+$ with
respect to the measure $d\mu$ defined by
\begin{align*}
  \int_{\Gamma_+}\varphi(x,v)\delta_{\{x',v'\}}(x,v)\:d\mu(x,v)
  =\varphi(x',v'),\qquad
  \varphi\in C_c^\infty(\Gamma_+).
\end{align*}
Similarly, $\delta_{\{x\}}(y)$ is the delta distribution on $M$
supported at $x$.

If $x,y\in M$ let $v(x,y)\in S_xM$ denote the
tangent vector at $x$ of the unit speed geodesic joining $x$ to $y$
(uniquely defined since $M$ is simple), and $d(x,y)$ be the
Riemannian distance between $x$ and $y$. Denote the total
attenuation along the geodesic from $x$ to $y$ by
\begin{align}\label{attenxtoy}
  E(x,y):=\exp\Bigl\{-\int_0^{d(x,y)}a\bigl(\geov{x,v(x,y)}{t}\bigr)
  \:dt\Bigr\}.
\end{align}
Note that $\dgeo{y,v(y,x)}{d(y,x)-s}=-\dgeo{x,v(x,y)}{s}$, so when
$a$ depends on direction, $E(x,y)\neq E(y,x)$.

\begin{proposition}
   \label{albedo kernel 3}
   \cite{Mc04} Let $(M,g)$ be a smooth simple Riemannian manifold of
   dimension $n\geq3$.  Assume that $(a,k)$ are admissible and subcritical
   so that the forward problem is well-posed.  Then the albedo operator
   $\A:L^1(\Gamma_-,d\mu)\to L^1(\Gamma_+,d\mu)$ is bounded and its
   Schwartz kernel $\alpha(x,v,x',v')$, considered as a distribution on
   $\Gamma_+$ parameterized by $(x',v')\in\Gamma_-$, has the expansion
   $\alpha=\alpha_0+\alpha_1+\alpha_2$, where
  \begin{align}
    \alpha_0&=E\bigl(\geo{x,v}{-\tau_-(x,v)},x\bigr)
    \delta_{\{\geov{x',v'}{\tau(x',v')}\}}(x,v),\label{alpha0}\\
    \alpha_1&=\int_0^{\tau_+(x',v')}\int_0^{\tau_-(x,v)}
    E(y(s),x)E(x',z(t))
    k(z(t),\dot z(t),\dot y(s))
    \delta_{\{y(s)\}}(z(t))\:ds\:dt\label{alpha1}\\
    &\qquad y(s)=\geo{x,v}{s-\tau_-(x,v)},\ z(t)=\geo{x',v'}{t},\nonumber\\
    \alpha_2&\in L^\infty(\Gamma_-;L^1(\Gamma_+,d\mu)).
    \label{alpha2}
  \end{align}
\end{proposition}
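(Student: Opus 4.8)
The plan is to expand the transport solution in a Neumann series organized by the number of scattering events and to read off the three pieces of the kernel term by term. Write the collision operator as $(Ku)(x,v)=\int_{S_xM}k(x,v',v)u(x,v')\,d\omega_x(v')$, let $\J u_-$ denote the solution of the purely ballistic problem $\D w=-aw$, $w|_{\Gamma_-}=u_-$, and let $\T$ be the solution operator of the inhomogeneous ballistic problem $\D w=-aw+f$, $w|_{\Gamma_-}=0$. Integrating along the geodesic flow and using the attenuation factor \eqref{attenxtoy} gives the explicit formula
\[
(\T f)(x,v)=\int_{-\tau_-(x,v)}^{0}E\bigl(\geo{x,v}{s},x\bigr)\,f\bigl(\geov{x,v}{s}\bigr)\,ds ,
\]
and an analogous closed form for $\J$. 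With these notations \eqref{TE}--\eqref{BC} is equivalent to the integral equation $u=\J u_-+\T K u$. Either subcritical hypothesis \eqref{critic_CS} or \eqref{critic_DL} makes the forward problem well posed and the Neumann series
\[
u=\sum_{m=0}^{\infty}(\T K)^{m}\J u_-
\]
convergent in $L^1(SM)$. Taking the trace on $\Gamma_+$ recovers $\A$, and I would group the series as $m=0$, $m=1$, and $m\ge 2$, producing $\alpha_0$, $\alpha_1$, and $\alpha_2$ respectively.

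Second, I would compute the two singular pieces directly. The term $m=0$ is the trace of $\J u_-$ on $\Gamma_+$: tracing the geodesic through an outgoing point $(x,v)$ back to its entry point $\geov{x,v}{-\tau_-(x,v)}$ shows that $(\J u_-)|_{\Gamma_+}$ evaluates $u_-$ at that entry point with weight $E(\geo{x,v}{-\tau_-(x,v)},x)$; reparametrizing by the forward exit map $(x',v')\mapsto\geov{x',v'}{\tau(x',v')}$ expresses this as the delta kernel $\alpha_0$ of \eqref{alpha0}. For $m=1$ I would write out $(\T K\J u_-)|_{\Gamma_+}$ as a double integration along the incoming geodesic $z(t)=\geo{x',v'}{t}$ and the backward outgoing geodesic $y(s)=\geo{x,v}{s-\tau_-(x,v)}$: the incoming particle is attenuated by $E(x',z(t))$ up to the scattering point, scatters from direction $\dot z(t)$ into $\dot y(s)$ with weight $k(z(t),\dot z(t),\dot y(s))$, and is then attenuated by $E(y(s),x)$ out to the boundary. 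The single geometric constraint that the scattering point lie on both geodesics is exactly the factor $\delta_{\{y(s)\}}(z(t))$, and this yields \eqref{alpha1}.

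The crux is to show that the remainder $\alpha_2=\bigl(\sum_{m\ge 2}(\T K)^{m}\J u_-\bigr)\big|_{\Gamma_+}$ is an honest function, namely an element of $L^\infty(\Gamma_-;L^1(\Gamma_+,d\mu))$ rather than a genuine distribution. The mechanism is that each extra factor of $K$ integrates over a full sphere $S_xM$ of intermediate scattering directions, and when $n\ge 3$ this angular integration, combined with a change of variables along the geodesic flow, regularizes the singularity carried by $\alpha_1$. Concretely I would analyze the double-scattering kernel $(\T K)^2\J$: after writing out the two intermediate scattering points and directions, I would change variables from arc length along a geodesic together with the scattering direction to the coordinates on $M$ furnished by the exponential map, which is a global diffeomorphism because $(M,g)$ is simple. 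This converts the two nested delta constraints into a Jacobian weight that is locally integrable precisely in dimension $n\ge 3$; the invariance of the Liouville form under the geodesic flow, and the invariance of $d\mu$ recorded in Section~\ref{extension}, guarantee that the change of variables is measure preserving up to that controlled Jacobian. Estimating the resulting kernel in $L^\infty_{(x',v')}L^1_{(x,v)}$ and summing the geometric series in $m$ by subcriticality yields the claimed membership of $\alpha_2$.

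This double-scattering regularity estimate is the main obstacle; the ballistic and single-scattering computations are straightforward integrations along the geodesic flow. Throughout, the argument parallels the Euclidean singular decomposition of Choulli--Stefanov, the Riemannian content being the replacement of straight-line transport by the geodesic flow and the systematic use of simplicity to ensure that $\exp_x$ is a diffeomorphism with Jacobians one can control.
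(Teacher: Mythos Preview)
The paper does not give its own proof of this proposition: it is quoted from \cite{Mc04} (with the Euclidean analogue in \cite{ChSt99}) and used as a black box. Your sketch is exactly the standard Neumann-series argument carried out in those references---solve $u=\J u_-+\T K u$ by iterating, read off the $m=0$ and $m=1$ kernels directly, and show the $m\ge 2$ tail is a function via the double-scattering change of variables using $\exp_x$---so there is nothing to compare.

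One caution on the remainder step: the clean statement $\alpha_2\in L^\infty(\Gamma_-;L^1(\Gamma_+,d\mu))$ does not follow from summing a convergent Neumann series in $L^1(SM)$ alone; you need that $(\T K)^2$ maps the singular kernel of $\J$ into a function whose trace on $\Gamma_+$ lies in $L^1(\Gamma_+,d\mu)$ uniformly in $(x',v')$. In \cite{Mc04} this is done by explicitly computing the Jacobian of the change of variables from $(s,w)$ (arc length and scattering direction) to points of $M$ via the exponential map and showing it behaves like $d^{\,n-1}$ near the diagonal, which is integrable against the $1/d^{\,n-1}$ singularity of the single-scattering kernel precisely when $n\ge 3$. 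Your paragraph identifies this mechanism correctly but states it at the level of a plan; if you were to write out the proof you would need to make that Jacobian bound and the resulting $L^1$ estimate explicit.
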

We note that $k(z(t),\dot z(t),\dot y(s))$ is only defined on the
support of the integrand, namely when $y(s)=z(r)$.

When $n=2$ the left hand side of \eqref{TE} defines an unbounded operator
of domain $ \{u\in L^\infty(S M)\st \D u\in L^\infty(S M),\ 
u|_{\Gamma_-}=0\}.$ Provided that \eqref{bounded_coeff_2d} holds, and we
have subcriticality \eqref{critic_CS} or \eqref{critic_DL}, this operator
has a bounded inverse in $L^\infty(S M)$, see \cite{Mc05}, and the singular
decomposition of the albedo kernel is more explicit as follows.

Given $(x,v,x',v')\in \Gamma_+\times\Gamma_-$, define
$\chi:\Gamma_+\times\Gamma_-\to\{0,1\}$ by $\chi(x,v,x',v')=1$ if
there exist $0\leq s=s(x,v,x',v')\leq \tau_-(x,v)$ and $0\leq
t=t(x,v,x',v')\leq \tau_+(x',v')$ such that
$\geo{x,v}{s-\tau_-(x,v)}=\geo{x',v'}{t}$ (i.e., the geodesics
intersect in $M$), and $\chi(x,v,x',v')=0$ otherwise. When
$\chi(x,v,x',v')=1$, let $\psi(x,v,x',v')$ be the angle between the
tangent vectors of these geodesics at the point of intersection.
\begin{proposition}
   \label{albedo kernel 2}
   \cite{Mc05} Let $(M,g)$ be a two dimensional simple Riemannian
   manifold. Assume that $(a,k)$ are
   admissible and that \eqref{bounded_coeff_2d} holds.  Then the albedo
   operator $\A:L^\infty(\Gamma_-,d\mu)\to L^\infty(\Gamma_+,d\mu)$ is
   bounded and its Schwartz kernel $\alpha(x,v,x',v')$, considered as a
   distribution on $\Gamma_+$ parameterized by $(x',v')\in\Gamma_-$, has
   the expansion $\alpha=\alpha_0+\alpha_1+\alpha_2$, where
  \begin{align*}
    &\alpha_0=E\bigl(\geo{x,v}{-\tau_-(x,v)},x\bigr)
    \delta_{\{\geov{x',v'}{\tau(x',v')}\}}(x,v),\\
    &\alpha_1=\chi(x,v,x',v')E(x',\geo{x',v'}{t})E(\geo{x',v'}{t},x)\J
    \frac{k\bigl(\geov{x',v'}{t},\dgeo{x,v}{s-\tau_-(x,v)}
      \bigr)}{|\sin(\psi(x,v,x',v'))|},\\
    &0\leq \alpha_2\chi\leq C\|k\|^2_{L^\infty(S^2M)}
    \Bigl(1+\log\frac{1}{|\sin(\psi(x,v,x',v'))|}\Bigr).
  \end{align*}
  Here, $\J=\J(x,v,x',v')$ is a function uniformly bounded $0<m_1\leq
  \J\leq m_2<\infty$ on $\Gamma_+\times\Gamma_-$ (see \cite[Proposition
  4]{Mc05}).
\end{proposition}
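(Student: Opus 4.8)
The plan is to build the Schwartz kernel from the collision (Neumann) expansion of the transport solution and then read off the three pieces. Writing $T_1 u = \D u + a u$ together with the zero incoming trace, the boundary value problem \eqref{TE}, \eqref{BC} is equivalent to $u = u_0 + T_1^{-1}Ku$, where $u_0$ is the ballistic solution ($T_1 u_0 = 0$, $u_0|_{\Gamma_-} = u_-$), $T_1^{-1}$ is the solution operator with vanishing incoming trace, and $K$ is the scattering integral. Under the admissibility \eqref{bounded_coeff_2d} and subcriticality \eqref{critic_CS} or \eqref{critic_DL}, the transport operator has a bounded inverse on $L^\infty(SM)$ (recalled above from \cite{Mc05}), so the series $u = \sum_{j\ge0}u_j$ with $u_{j+1}=T_1^{-1}Ku_j$ converges in $L^\infty$ and $\A$ is bounded. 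Taking traces on $\Gamma_+$ and grouping $\alpha=\alpha_0+\alpha_1+\alpha_2$, with $\alpha_0$ coming from $u_0$, $\alpha_1$ from $u_1$, and $\alpha_2$ from $\sum_{j\ge2}u_j$, reduces everything to computing the first two kernels explicitly and estimating the tail.

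The ballistic term is immediate: solving $T_1 u_0=0$ along geodesics transports $u_-$ from the unique entry point $\geov{x,v}{-\tau_-(x,v)}$ to $(x,v)$ with attenuation $E(\geo{x,v}{-\tau_-(x,v)},x)$, which is precisely the stated $\alpha_0$, the matching of entry and exit producing the delta factor. For $\alpha_1$ I would solve $T_1 u_1 = Ku_0$ by integrating the single-scattering source along the backward geodesic from $(x,v)$; this yields an integral over the scattering location $s\in[0,\tau_-(x,v)]$ and the pre-scattering direction $w'\in S_zM$ at $z=\geo{x,v}{s-\tau_-(x,v)}$, against $k(z,w',\dgeo{x,v}{s-\tau_-(x,v)})$ times $u_0(z,w')$. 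Since $u_0(z,w')$ is determined by the incoming data at the entry point of the geodesic through $(z,w')$, I would change variables from $(s,w')$ to the incoming boundary variable $(x',v')\in\Gamma_-$. In dimension two this map is generically a local diffeomorphism, so $\alpha_1$ is an honest function: the scattering point is forced to be the unique intersection of the two geodesics (hence the cutoff $\chi$), the attenuation splits as $E(x',\geo{x',v'}{t})E(\geo{x',v'}{t},x)$, and the Jacobian yields the bounded geodesic-flow factor $\J$ together with the $|\sin\psi|^{-1}$ singularity, $\psi$ being the intersection angle. This is where the two-dimensional geometry is essential: on a simple surface two geodesics meet transversally at an isolated point (degenerating only as $\psi\to0$), whereas for $n\ge3$ they generically miss, which is why the kernel in Proposition \ref{albedo kernel 3} remains distributional.

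The main obstacle is the estimate on the multiple-scattering remainder $\alpha_2$. The lower bound $0\le\alpha_2$ follows from $k\ge0$, which propagates positivity through each iterate $u_j$. For the upper bound I would isolate the double-scattering term $u_2$, which carries the worst singularity: writing it as an iterated integral over two successive scattering vertices and performing at each vertex the same reduction as for $\alpha_1$, the intermediate integration over the position of the first scattering point along the geodesics produces, near tangency, the factor $1+\log(1/|\sin\psi|)$, while the two copies of $k$ give the $\|k\|^2_{L^\infty(S^2M)}$ prefactor. One then shows that the tail $\sum_{j\ge3}u_j$ introduces no new singularity and is summable, the subcriticality again controlling the geometric series with a single constant $C$. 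The delicate point is the sharp bookkeeping of the geodesic Jacobians and of the $\sin\psi$ behaviour inside the double integral, so as to extract a logarithmic rather than a power blow-up; I would follow the Euclidean computation while tracking the curvature-dependent Jacobians through the simplicity of $(M,g)$.
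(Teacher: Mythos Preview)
The paper does not prove this proposition: it is quoted verbatim from \cite{Mc05} (note the citation in the proposition header), and no argument is given here. So there is nothing in the present paper to compare your proposal against.

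That said, your outline is the standard route and is exactly the strategy of \cite{Mc05}: write the solution via the collision expansion $u=\sum_{j\ge0}(T_1^{-1}K)^j u_0$, identify $\alpha_0$ with the ballistic trace, obtain $\alpha_1$ as the single-scattering kernel after the change of variables from the scattering vertex to the incoming boundary point (the Jacobian producing $\J/|\sin\psi|$), and bound $\alpha_2$ by isolating the double-scattering term and summing the tail. Two small caveats. First, the positivity $0\le\alpha_2\chi$ does rely on $k\ge0$, which is a standing physical assumption in this subject but is not restated in the present paper; you should flag it. Second, your sketch of the $\alpha_2$ bound is the genuinely delicate part and is where most of the work in \cite{Mc05} lies: the logarithmic (rather than algebraic) blow-up comes from a careful estimate of the iterated kernel $K^2\phi_0$ near tangency, and the simplicity hypothesis (plus the diameter bound when $\kappa>0$) is used to control the geodesic Jacobians uniformly. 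Your plan names these ingredients correctly but would need the explicit computations of \cite[Propositions~4 and~7]{Mc05} to be complete.
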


\section{Statement of the main results}
Let $(B_a,\|\cdot\|_{B_a})$ and $(B_k,\|\cdot\|_{B_k})$ be Banach
spaces in which the attenuation and, respectively, the scattering
kernel are considered, $(a,k),(\ta,\tk)\in B_a\times B_k$. The {\em
distance} $\Delta$ between equivalence classes {\em with respect to
$B_a\times B_k$ }is defined by the infimum of the distances between
all possible pairs of representatives:
\begin{align*}
\Delta(\langle a,k\rangle,\langle
\ta,\tk\rangle):=\inf_{(a',k')\in\langle a,k\rangle,~~
(\ta',\tk')\in\langle\ta,\tk\rangle}\max\{\|a'-\ta'\|_{B_a},
\|k'-\tk'\|_{B_k}\}.
\end{align*}

The following norms are used throughout
\begin{align*}
  \|a\|_\infty&= \text{ess sup}_{(x,v)\in SM}|a(x,v)|,\\
  \|k\|_{\infty,1}
  &=\text{ess sup}_{(x,v')\in SM}\int_{S_xM}|k(x,v',v)|\:d\omega_x(v),\\
  \|k\|_\infty
  &=\text{ess sup}_{(x,v',v)\in S^2M}\left|k(x,v',v)\right|,\\
  \|k\|_1&=\int_M\int_{S_xM}\int_{S_xM}|k(x,v',v)|
  \:dx\:d\omega_x(v')\:d\omega_x(v).
\end{align*}

{\bf Case $n\geq3$:} Define the class
\begin{align*}
  U_{\Sigma,\rho}:=\{(a,k)\text{ as in }
  \eqref{continuous_coeff}\st\|a\|_\infty\leq\Sigma,\
  \|k\|_{\infty,1}\leq\rho\}.
\end{align*}

\begin{theorem}
  \label{main_thm}
  Let $(M,g)$ be simple Riemannian manifold of dimension $n\geq 3$. Let
  $(a,k),(\ta,\tk)\in U_{\Sigma,\rho}$ be such that the corresponding
  forward problems are well posed. Then
  \begin{align*}
    \Delta(\langle a,k\rangle,\langle \ta,\tk\rangle)\leq
    C\|\A-\tA\|_{\L(L^1(\Gamma_-,d\mu);L^1(\Gamma_+,d\mu))},
  \end{align*}
  where $\Delta$ is with respect to $L^\infty(SM)\times L^1(S^2M)$, and $C$
  is a constant depending only on $\Sigma$, $\rho$, $n$, and $c_0$ in
  \eqref{nontangential}. More precisely, there exists a representative
  $(a',k')\in\langle a,k\rangle$ such that
  \begin{align}
    \|a'-\ta\|_\infty
    &\leq C\|\A-\tA\|_{\L(L^1(\Gamma_-,d\mu);L^1(\Gamma_+,d\mu))},
    \label{closeness_in_a}\\
    \|k'-\tk\|_1
    &\leq C\|\A-\tA\|_{\L(L^1(\Gamma_-,d\mu);L^1(\Gamma_+,d\mu))}.
    \label{closeness_in_k}
  \end{align}
\end{theorem}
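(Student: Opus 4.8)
The plan is to prove the two estimates \eqref{closeness_in_a} and \eqref{closeness_in_k} separately, extracting the attenuation from the leading singularity $\alpha_0$ of the albedo kernel and the scattering kernel from the next term $\alpha_1$, using throughout that for an integral operator the $L^1\to L^1$ norm is
\[
\|\A-\tA\|=\operatorname{ess\,sup}_{(x',v')\in\Gamma_-}\int_{\Gamma_+}|\alpha-\tilde\alpha|\,d\mu(x,v).
\]
The three summands of Proposition \ref{albedo kernel 3} have strictly decreasing singularity: $\alpha_0-\tilde\alpha_0$ is a Dirac mass carried by the single outgoing point $\geov{x',v'}{\tau(x',v')}$, whereas $\alpha_1-\tilde\alpha_1$ and $\alpha_2-\tilde\alpha_2$ are integrable, with $\alpha_2$ even absolutely continuous. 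First I would localise the inner integral to a shrinking neighbourhood of that outgoing point: the Dirac part contributes exactly the coefficient difference while the two integrable terms contribute $o(1)$, so that at almost every geodesic $|E(\geo{x,v}{-\tau_-(x,v)},x)-\tilde E(\cdots)|\le\|\A-\tA\|$. Taking logarithms and bounding the exponents below via $\Sigma$ and the (fixed, hence bounded) travel time yields $\|I\|_\infty\le C\|\A-\tA\|$ for the line integrals $I$ of $a-\ta$ over the complete geodesics.

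Next I would build the gauge realising the representative $(a',k')$. Set $\log\phi=-F+(\tau_+/\tau)\,I$, where $F(x,v)=\int_0^{\tau_+(x,v)}(a-\ta)\circ\geov{x,v}{\cdot}$ is the forward partial line integral and $I$ is its value over the whole geodesic (constant along the flow). Since $\tau_+=0$ on $\Gamma_+$ and $\tau_+=\tau$, $F=I$ on $\Gamma_-$, this $\phi$ equals $1$ on all of $\Gamma$, while $\D\log\phi=(a-\ta)-I/\tau$. Hence $a'=a-\D\log\phi=\ta+I/\tau$, and by \eqref{nontangential}
\[
\|a'-\ta\|_\infty\le\|I\|_\infty/c_0\le C\|\A-\tA\|,
\]
which is \eqref{closeness_in_a}. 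Boundedness of $a,\ta$ keeps $\phi,1/\phi,\D\phi\in L^\infty(SM)$, so $(a',k')\in\langle a,k\rangle$ is admissible.

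For the scattering kernel I would use that the albedo operator is gauge invariant, so the kernel of $(a',k')$ coincides with that of $(a,k)$ term by term; in particular $\alpha_1'=\alpha_1$, now with attenuation weights computed from $a'$. Integrating the norm identity in $(x',v')$ gives $\int_{\Gamma_-}\int_{\Gamma_+}|\alpha-\tilde\alpha|\,d\mu\,d\mu\le\mu(\Gamma_-)\|\A-\tA\|$. Subtracting the Dirac contribution already controlled, and separating the single-scattering term $\alpha_1'-\tilde\alpha_1$ from the absolutely continuous remainder $\alpha_2'-\tilde\alpha_2$ by localising to the (codimension $n-2$, hence lower-dimensional) set where the two geodesics meet, isolates $\int\int|\alpha_1'-\tilde\alpha_1|$. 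The substitution sending $(x',v',t,\text{scattered direction})$ to the scattering point and the two directions converts this, by the geodesic-flow invariance of $d\mu$ recalled after \eqref{dmu}, into an integral over $S^2M$ of $|W'k'-\tilde W\tilde k|$, with $W',\tilde W$ the products of attenuation factors and weights bounded above and below in terms of $\Sigma$ and $c_0$. Splitting $W'k'-\tilde W\tilde k=\tilde W(k'-\tilde k)+(W'-\tilde W)k'$, the second summand is $\le C\|a'-\ta\|_\infty\|k'\|_1\le C\|\A-\tA\|$ (using the previous paragraph and $\|k'\|_1\le C(\Sigma,\rho,n)$), while the first reproduces $\|k'-\tilde k\|_1$ up to the bounded weights, giving \eqref{closeness_in_k}.

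The main obstacle is this last change of variables: unlike $\alpha_0$, the term $\alpha_1$ is not a point mass but a measure carried by the set of intersecting geodesics, so isolating it from $\alpha_2$ inside the single scalar bound $\int\int|\alpha-\tilde\alpha|$ demands a careful multi-scale localisation and control of the Jacobian of the scattering parametrisation. Verifying that this Jacobian, together with $d\mu\,d\mu$ and the constraint $\delta_{\{y(s)\}}(z(t))$, produces exactly the volume form defining $\|\cdot\|_1$ on $S^2M$, with constants depending only on $\Sigma,\rho,n,c_0$, is where the invariance property of $d\mu$ from \cite{Sh94} does the essential work.
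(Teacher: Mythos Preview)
Your approach is correct and matches the paper's architecture: the gauge you construct is exactly the paper's (equations \eqref{trial_gauge}--\eqref{def_gauge}), and the estimate for $a'$ is obtained identically. For $k'$, you package the separation of $\alpha_0,\alpha_1,\alpha_2$ into the identity $\|\A-\tA\|=\operatorname{ess\,sup}_{(x',v')}\|(\alpha-\tilde\alpha)(\cdot,\cdot,x',v')\|_{\mathrm{TV}}$ together with mutual singularity of the three pieces (valid for $n\ge3$ since the support of $\alpha_1$ has codimension $n-2\ge1$ in $\Gamma_+$). The paper instead establishes the pointwise estimate \eqref{bal_estimate2} by duality: approximate identities on $\Gamma_-$ (Lemma \ref{app_id_cor}) paired with explicit test sequences $\phi_m\in L^\infty(\Gamma_+)$ that vanish on a shrinking tube about the unbroken geodesic and equal $\operatorname{sgn}(k-\tilde k)$ along broken ones, so that the $\alpha_0$ term vanishes, the $\alpha_2$ term tends to zero, and the $\alpha_1$ term yields $\int_0^{\tau_+}\!\int_{S_yM}|k'-\tilde k|F'\,dw\,dt$ directly; integrating this over $\Gamma_-$ via Santal\'o then gives \eqref{closeness_in_k}. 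Your total-variation route is cleaner conceptually, but be aware that the operator-norm identity you invoke for measure-valued kernels, and the equality $\|\alpha_1'-\tilde\alpha_1\|_{\mathrm{TV}}=\int_0^{\tau_+}\!\int_{S_yM}|F'k'-\tilde F\tilde k|\,dw\,dt$ (which needs a.e.\ injectivity of $(t,w)\mapsto\Gamma_+$), both require precisely the approximate-identity and countable-test-function argument the paper carries out explicitly in Theorem \ref{bal_lemma}; so the two proofs are really the same mechanism in different clothing.
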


{\bf Case $n=2$:} From Proposition \ref{albedo kernel 2} above
recall the Schwartz kernel of the albedo operator in the form
\[\alpha=
A_0(x,v)\delta_{\{\geov{x',v'}{\tau(x',v')}\}}(x,v)+\beta\] where
\begin{align*}
&A_0(x,v)=E\bigl(\geo{x,v}{-\tau_-(x,v)},x\bigr)\in L^\infty(\Gamma_+)\\
&\beta(x,v,x',v')\chi|\sin\psi(x,v,x',v')|
 \in L^\infty(\Gamma_+^R\times\Gamma_-^R).
\end{align*}
We define
\begin{align}\label{start_norm}
  \|\A\|_*=\max\{\|A_0\|_\infty,\|\beta\chi|\sin\psi|\|_\infty\}.
\end{align}
By using the Remark in Section \ref{extension}, the proof in
\cite{McStTa10-GaSt} carries through verbatim to show that
$\|\A-\tA\|_*$ is preserved when transported from the boundary of
the inner domain $M$ to the boundary of the larger domain.

Define the class
\begin{align*}
  V_{\Sigma,\rho}:=\bigl\{(a,k)\text{ as in \eqref{bounded_coeff_2d}}\st
  \|a\|_\infty\leq \Sigma,\ \|k\|_{\infty}\leq \rho\bigr\}.
\end{align*}

\begin{theorem}
  \label{main_thm_2d}
Let $(M,g)$ be a two dimensional simple Riemannian manifold.
For any $\Sigma>0$ there exists $\rho>0$
  depending only on $\Sigma$ and $(M,g)$ such that the following holds: if
  $(a,k)$, $(\tilde a,\tilde k)\in V_{\Sigma,\rho}$ then
  \begin{align*}
    \Delta(\ip{a,k},\ip{\ta,\tk})\leq C\|\A-\tA\|_*
  \end{align*}
  where $\Delta$ is with respect to $L^\infty(SM)\times L^\infty(S^2M)$ and $C$
  is a constant depending only on $\Sigma$ and $(M,g)$.
\end{theorem}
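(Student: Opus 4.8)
The plan is to use that the albedo operator --- hence each term $\alpha_0,\alpha_1,\alpha_2$ of its singular decomposition in Proposition \ref{albedo kernel 2} --- is gauge invariant, and to produce a \emph{single} gauge $\phi$ that makes one representative of $\ip{a,k}$ close to $(\tilde a,\tilde k)$ in both coordinates. By the definition \eqref{start_norm}, $\|\A-\tA\|_*$ dominates both the ballistic coefficient difference $\|A_0-\tilde A_0\|_\infty$ and the weighted off-diagonal difference $\|(\beta-\tilde\beta)\chi|\sin\psi|\|_\infty$, and these are handled in turn. The argument parallels the Euclidean one of \cite{McStTa10-GaSt}, with geodesics replacing straight lines.

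For the attenuation, set $b=\tilde a-a$ and $h(x,v)=\int_{-\tau_-(x,v)}^{0}b(\geov{x,v}{t})\,dt$, so that $\D h=b$, $h=0$ on $\Gamma_-$, and $h=H$ on $\Gamma_+$, where $H$ is the full-chord geodesic ray transform of $b$ extended to $SM$ as a first integral of the flow ($\D H=0$). Since $A_0=\exp\{-\int a\}$ along the whole chord and $\|a\|_\infty,\|\tilde a\|_\infty\le\Sigma$, these exponentials are bounded below by a constant depending on $\Sigma$ and $(M,g)$, so $\|H\|_\infty\le C\|A_0-\tilde A_0\|_\infty$. The function $\tilde h:=h-(\tau_-/\tau)H$ vanishes on all of $\Gamma$ (the factor $\tau_-/\tau$ runs from $0$ at entry to $1$ at exit), hence $\phi:=e^{-\tilde h}$ is an admissible gauge in the sense of Definition \ref{gauge_equivalence}. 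Using $\D\tau_-=1$, $\D\tau=0$ and $\D H=0$ gives $\D\tilde h=b-H/\tau$, so the representative $a':=a-\D\log\phi=\tilde a-H/\tau$ satisfies, by \eqref{nontangential}, $\|a'-\tilde a\|_\infty=\|H/\tau\|_\infty\le c_0^{-1}\|H\|_\infty\le C\|\A-\tA\|_*$.

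For the scattering kernel I keep the same $\phi$. The factors $\chi$, $\psi$ and $\J$ of Proposition \ref{albedo kernel 2} are purely geometric, and since $\phi=1$ on $\Gamma$ the boundary $\phi$-factors telescope, so the product $G:=E(x',y)E(y,x)\,k(y,\dgeo{x',v'}{t},\dgeo{x,v}{s-\tau_-(x,v)})$ at the intersection point $y=\geo{x',v'}{t}$ is gauge invariant, i.e.\ unchanged under $(a,k)\mapsto(a',k')$. Because $(\beta-\tilde\beta)\chi|\sin\psi|=\chi\J(G-\tilde G)+(\alpha_2-\tilde\alpha_2)\chi|\sin\psi|$ and $\J\ge m_1>0$, the data controls $|G-\tilde G|$ up to the remainder term. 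Writing $G-\tilde G=E'(x',y)E'(y,x)(k'-\tilde k)+\big(E'(x',y)E'(y,x)-\tE(x',y)\tE(y,x)\big)\tilde k$, bounding the attenuations above and below, using $|E'(x',y)E'(y,x)-\tE(x',y)\tE(y,x)|\le C\|a'-\tilde a\|_\infty$ and $|\tilde k|\le\rho$, and noting that every $(y,w',w)\in S^2M$ with $w'\ne\pm w$ is realized by an intersecting pair of extended geodesics (as $M$ is simple), I obtain the pointwise bound $|k'-\tilde k|\le C\|\A-\tA\|_*+C\,\|(\alpha_2-\tilde\alpha_2)\chi|\sin\psi|\|_\infty$ for a.e.\ $(y,w',w)$.

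The main obstacle is the multiple-scattering remainder $\alpha_2-\tilde\alpha_2$, and it is exactly what forces $\rho$ to be small: the a priori bound $0\le\alpha_2\chi\le C\|k\|_\infty^2(1+\log(1/|\sin\psi|))$ only yields an $O(\rho^2)$ additive error that does not vanish with the data. Instead I would return to the iterated-collision (Neumann) series defining $\alpha_2$, which converges by the subcriticality assumption, and estimate it difference by difference: replacing one factor of $k$ by $k'-\tilde k$, or one $E$ by $E'-\tE$ (controlled by $\|a'-\tilde a\|_\infty$), leaves at least one surviving factor $\|k\|_\infty\le\rho$, so that $\|(\alpha_2-\tilde\alpha_2)\chi|\sin\psi|\|_\infty\le C\rho\big(\|a'-\tilde a\|_\infty+\|k'-\tilde k\|_\infty\big)$. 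Inserting the bound already established for $\|a'-\tilde a\|_\infty$ and taking the supremum over $S^2M$ gives $\|k'-\tilde k\|_\infty\le C\|\A-\tA\|_*+C\rho\|k'-\tilde k\|_\infty$; choosing $\rho$ small enough (depending only on $\Sigma$ and $(M,g)$) to absorb the last term yields $\|k'-\tilde k\|_\infty\le C\|\A-\tA\|_*$. Taking $(a',k')\in\ip{a,k}$ and $(\tilde a,\tilde k)$ as the two representatives in the definition of $\Delta$ then gives $\Delta(\ip{a,k},\ip{\tilde a,\tilde k})\le C\|\A-\tA\|_*$.
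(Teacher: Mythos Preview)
Your proposal is correct and follows essentially the same route as the paper: the gauge $\phi=e^{-\tilde h}$ with $\tilde h=h-(\tau_-/\tau)H$ is exactly the paper's $\tilde\varphi$ (compare \eqref{trial_gauge}--\eqref{def_gauge}), the attenuation estimate is identical to \eqref{final_estimate_for_a}, and the strategy for $k$---split the single-scattering term, bound the multiple-scattering remainder by $C\rho\|k'-\tilde k\|_\infty$ plus data, then absorb for $\rho$ small---is the paper's as well. The only substantive difference is that where you sketch the Neumann-series difference estimate for $\alpha_2-\tilde\alpha_2$, the paper makes this rigorous by writing $\phi_2=(I-K)^{-1}K^2\phi_0$ and invoking three specific results from \cite{Mc05} (Lemma~9, Proposition~7, Lemma~10) which deliver bounds of the form $C\rho\|k'-\tilde k\|_\infty(1-\log|\sin\psi|)$; the factor $|\sin\psi|$ in \eqref{foundation} then kills the logarithm. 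Your sketch does not address this logarithmic blow-up, which is the genuinely two-dimensional technical point, so to turn your outline into a proof you would need to reproduce or cite exactly those estimates.
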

Note that $\rho$ sufficiently small already yields a subcritical
regime as in \eqref{critic_CS}.
\section{Preliminary estimates}\label{Preliminary Estimates}
In this section we extend a result from the Euclidean to Riemannian
setting; see \cite[Theorem 3.2]{BaJo08} for contrast. In addition,
the proof below allows for discontinuous coefficients, which is
needed when transporting the albedo operator to the larger domain
(and no boundary knowledge of the coefficients is available).

\begin{lemma} \label{app_id_cor}
  There is a family of maps $\phi_{\eps,x_0',v_0'}\in
  L^1(\Gamma_-,d\mu)$, for $(x_0',v_0')\in\Gamma_-$ and $\eps>0$, such that
  $\|\phi_{\eps,x_0',v_0'}\|_{L^1(\Gamma_-,d\mu)}=1$ and,
  for any $f\in L^\infty(\Gamma_-,d\mu)$ given,
  \begin{align}\label{app_id_limit}
    \lim_{\eps\to 0}\int_{\Gamma_-}
    \phi_{\eps,x_0',v_0'}(x',v')f(x',v')\:d\mu(x',v')
    =f(x_0',v_0'),
  \end{align}
  whenever $(x_0',v_0')$ is in the Lebesgue set of $f$. In particular,
  \eqref{app_id_limit} holds for almost every $(x_0',v_0')\in\Gamma_-$.
\end{lemma}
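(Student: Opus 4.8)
The plan is to construct an explicit approximate identity on $\Gamma_-$ adapted to the measure $d\mu$, mimicking the standard mollifier construction but accounting for the weight $|\langle v',\nu_{x'}\rangle|$ appearing in \eqref{dmu}. Concretely, I would fix a coordinate chart near the fixed point $(x_0',v_0')\in\Gamma_-$ in which $\Gamma_-$ is parametrized as a $(2n-2)$-dimensional manifold, and take $\phi_{\eps,x_0',v_0'}$ to be a bump of width $\eps$ centered at $(x_0',v_0')$, normalized so that its $L^1(\Gamma_-,d\mu)$ norm equals $1$. The normalization is forced by the requirement $\|\phi_{\eps,x_0',v_0'}\|_{L^1(\Gamma_-,d\mu)}=1$, so I would write $\phi_{\eps,x_0',v_0'}=c_\eps\,\eta_\eps$ where $\eta_\eps$ is a nonnegative bump supported in an $\eps$-neighborhood of $(x_0',v_0')$ and $c_\eps$ is chosen to enforce unit mass against $d\mu$.

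First I would verify the unit-mass condition is consistent: since $d\mu$ is a smooth positive measure away from the grazing set where $\langle v',\nu_{x'}\rangle=0$, and since $(x_0',v_0')\in\Gamma_-$ satisfies $\langle v_0',\nu_{x_0'}\rangle<0$ strictly, the weight is bounded above and below near $(x_0',v_0')$, so $c_\eps$ behaves like $\eps^{-(2n-2)}$ up to a bounded factor and the mollifiers form a genuine approximate identity with respect to Lebesgue measure in the chart. Second, I would establish \eqref{app_id_limit} at a Lebesgue point. The key computation is to write
\begin{align*}
  \int_{\Gamma_-}\phi_{\eps,x_0',v_0'}\,f\:d\mu - f(x_0',v_0')
  =\int_{\Gamma_-}\phi_{\eps,x_0',v_0'}(x',v')
   \bigl(f(x',v')-f(x_0',v_0')\bigr)\:d\mu(x',v'),
\end{align*}
using the unit-mass normalization, and then bound the right-hand side by $\sup|\phi_{\eps,x_0',v_0'}|$ times the average oscillation of $f$ over the $\eps$-ball, which tends to zero precisely at a Lebesgue point. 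Because the weight in $d\mu$ is comparable to a constant near a strictly non-grazing point, the $d\mu$-Lebesgue points and the Lebesgue-measure Lebesgue points coincide there, so the standard Lebesgue differentiation theorem gives the final clause that \eqref{app_id_limit} holds for a.e.\ $(x_0',v_0')$.

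The main obstacle I anticipate is the behavior near the grazing boundary $\langle v',\nu_{x'}\rangle=0$, where the measure $d\mu$ degenerates and the comparability of $d\mu$ to Lebesgue measure fails. For the fixed interior point $(x_0',v_0')$ this is harmless since we localize strictly away from grazing, but to obtain the a.e.\ statement uniformly one must check that the degenerate set has $d\mu$-measure zero and does not interfere with the differentiation theorem; I would handle this by restricting attention to the open subset where $\langle v',\nu_{x'}\rangle$ is bounded away from zero and noting that its complement is $d\mu$-null. A secondary technical point, flagged in the paragraph preceding the lemma, is that the coefficients (hence $f$, which in applications is built from the albedo kernel) may be only $L^\infty$ and discontinuous; the construction above is insensitive to this since the approximate-identity argument requires only $f\in L^\infty$ and uses Lebesgue points rather than continuity, which is exactly why the Lebesgue-set formulation is the right one to state.
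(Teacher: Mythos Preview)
Your proposal is correct and follows essentially the same approach as the paper: build an approximate identity in a local chart on $\Gamma_-$ and invoke Lebesgue differentiation at Lebesgue points. The only difference is cosmetic---the paper divides the bump explicitly by the density $|\langle\nu_{x'},v'\rangle|\sqrt{g_-}$ so that integration against $d\mu$ reduces to a pure Euclidean average (making the unit-mass condition automatic and bypassing your measure-comparability step), whereas you normalize by a constant $c_\eps$ and argue comparability; both routes are valid.
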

For a measurable function $f$ on $\Rn$, The Lebesgue set of $f$ is
\begin{align*}
  L_f:=\bigl\{x\st\lim_{r\to0}\frac{1}{|B_r(x)|}
  \int_{B_r(x)}|f(y)-f(x)|\:dy=0\bigr\}
\end{align*}
where $B_r(x)$ is the ball of radius $r$ centered at $x$, and where
$|\cdot|$ denotes Lebesgue measure.  We point out that if $\in
L^1_{loc}(\Rn)$, then $|\Rn\setminus L_f|=0$ (\cite[Theorem 3.20]{folland}).
\begin{proof}
  For $(x_0',v_0')\in \Gamma_-$ and $\eps>0$ sufficiently small, let
  $(x',v'):U\times W\subset\Rone^{n-1}\times\Rone^{n-1}\to\p SM$ with
  $x'(U)\subset\p M$ be a coordinate chart near
  $(x_0',v_0')=(x'(0),v'(0))$.  Let
  $d\Sigma^{2n-2}(x',v')=\sqrt{g_-}\:du\:dw$ be the local coordinate
  expression for the volume element (see \eqref{dmu}).  For
  $(x',v')\in\Gamma_-$, define
  \begin{align*}
    \phi_{\eps,x_0',v_0'}(x',v')=\frac{1}{|\ip{\nu_{x'},v'}|}
    \frac{1}{\sqrt{g_-(x',v')}}
    \varphi_\eps(u(x'))\varphi_\eps(w(x',v')),
  \end{align*}
  where $\varphi(u)\equiv 1/(\omega_{n-1})$ for $|u|< 1$, $\varphi(u)\equiv
  0$ for $|u|\geq 1$, and $\varphi_\eps(u)=\eps^{-n+1}\varphi(u/\eps)$. By
  $\omega_{n-1}$ we denoted the volume of the unit ball in $R^{n-1}$.
  Then, for any $\eps>0$, $\int\varphi_\eps(u)du=1$ and, by using
  \eqref{dmu}, we obtain
  \begin{align*}
    \int_{\Gamma_-}\phi_{\eps,x_0',v_0'}(x',v')&f(x',v')\:d\mu(x',v')\\
    &=\int_{\Gamma_-}\varphi_\eps(u(x'))\varphi_\eps(w(x',v'))f(x',v')
    \frac{1}{\sqrt{g_-(x',v')}}\:d\Sigma^{n-2}(x',v')\\
    &=\int_{\Rone^{2n-2}}\varphi_\eps(u)\varphi_\eps(w)f(x'(u),v'(u,w))
    \:du\:dw.
  \end{align*}
  Apply the equality above to $f\equiv 1$ to get
  $\|\phi_{\eps,x_0',v_0'}\|_{L^1(\Gamma_+,d\mu)}=1$. The conclusion
  follows from the approximation of identity for $L^p$ maps, see, e.g.,
  \cite[Theorem 8.15]{folland}.
\end{proof}

Consider $F:\p M\times SM\to\Rone$ defined by
\begin{align}
  \label{F defn}
  F(x',y,w)=E(x',y)E\bigl(y,\geo{y,w}{\tau_+(y,w)}\bigr),
\end{align}with $E$ as in \eqref{attenxtoy}.  $F(x',y,w)$ represents the
total attenuation along the broken geodesics from $x'\to y\to
\geo{y,w}{\tau_+(y,w)}$. 

Let $(a,k),(\tilde{a},\tilde{k})$ be admissible pairs as in
\eqref{continuous_coeff}. Recall that, after the extension of the
domain, the coefficients have fixed compact support away from the
boundary $\partial M$. All the operators bearing the tilde refer to
$(\tilde{a},\tilde{k})$ and are defined in a similar way to the ones
for $(a,k)$, i.e., $\tilde{\mathcal{A}}$ is the albedo operator
corresponding to $(\tilde{a},\tilde{k})$. Recall that $n$ is the
dimension of the space. To simplify notation let
$$\|\A-\tA\|:=\|\A-\tA\|_{\L(L^1(\Gamma_-,d\mu);L^1(\Gamma_+,d\mu))}.$$

\begin{theorem}
  \label{bal_lemma}
  Let $(a,k),(\tilde{a},\tilde{k})$ be as in \eqref{continuous_coeff}. For
  almost every $(x_0',v_0')\in\Gamma_-$ the following estimates hold: For
  $n\geq 2$,
  \begin{align}
    \label{bal_estimate1}
    \Bigl|e^{-\int_0^{\tau_+(x_0',v_0')}a(\geov{x_0',v_0'}{s})\:ds}-
    e^{-\int_0^{\tau_+(x_0',v_0')}\tilde{a}(\geov{x_0',v_0'}{s})\:ds}\Bigr|
    \leq\norm\A-\tA\norm.
  \end{align}
  For $n\geq 3$, with $y(t)=\geo{x_0',v_0'}{t}$,
  \begin{align}
    \int_0^{\tau_+(x_0',v_0')}\int_{S_{y(t)}M}
    &|k-\tilde{k}|(y(t),\dot y(t),w)
    F(x_0',y(t),w)\:d\omega_y(w)\:dt\nonumber\\
    &\leq\norm\A-\tA\norm+
    \|F-\tF\|_\infty\int_0^{\tau_+(x_0',v_0')}
    \int_{S_{y(t)}M}\tilde{k}(y(t),\dot y(t),w)\:d\omega_y(w)\:dt.
    \label{bal_estimate2}
  \end{align}
\end{theorem}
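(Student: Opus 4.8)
The plan is to probe the operator difference $\A-\tA$ with the approximate identities $\phi_\eps=\phi_{\eps,x_0',v_0'}$ furnished by Lemma~\ref{app_id_cor}. Since $\|\phi_\eps\|_{L^1(\Gamma_-,d\mu)}=1$, one has $\|(\A-\tA)\phi_\eps\|_{L^1(\Gamma_+,d\mu)}\le\|\A-\tA\|$ for every $\eps>0$, and the idea is to read off the individual coefficients in the singular decomposition $\alpha=\alpha_0+\alpha_1+\alpha_2$ of Proposition~\ref{albedo kernel 3} (resp.\ Proposition~\ref{albedo kernel 2}) by localizing in the output variable and letting $\eps\to0$. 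Set $(x_1,v_1):=\geov{x_0',v_0'}{\tau_+(x_0',v_0')}\in\Gamma_+$, the exit point of the geodesic issued from $(x_0',v_0')$. By the geodesic-flow invariance of $d\mu$ recorded in the Remark of Section~\ref{extension}, the exit map $\Gamma_-\to\Gamma_+$ is measure preserving, so $\alpha_0[\phi_\eps]$ is the coefficient $A_0(x,v)=E(\geo{x,v}{-\tau_-(x,v)},x)$ times the pushforward of $\phi_\eps$, an approximate identity concentrating at $(x_1,v_1)$; here $A_0(x_1,v_1)=E(x_0',x_1)=e^{-\int_0^{\tau_+(x_0',v_0')}a(\geov{x_0',v_0'}{s})\,ds}$, the quantity in \eqref{bal_estimate1}.

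For \eqref{bal_estimate1} I would fix a small neighborhood $O\ni(x_1,v_1)$ in $\Gamma_+$ and bound $\int_O|(\A-\tA)\phi_\eps|\,d\mu\le\|\A-\tA\|$ from below. For $\eps$ small the entire mass of $(\alpha_0-\tilde\alpha_0)[\phi_\eps]$ lies in $O$, and as $\eps\to0$ this contributes $|A_0(x_1,v_1)-\tilde A_0(x_1,v_1)|$ at every Lebesgue point $(x_1,v_1)$, that is, for a.e.\ $(x_0',v_0')$. The scattering terms $(\alpha_1-\tilde\alpha_1+\alpha_2-\tilde\alpha_2)[\phi_\eps]$ carry no concentrated mass at the single point $(x_1,v_1)$: in the coordinates $(t,w)$ of the single-scattering map the Jacobian cancels the $1/|\sin\psi|$ factor of Proposition~\ref{albedo kernel 2}, so for $n=2$ the density is integrable near $(x_1,v_1)$, while for $n\ge3$ the single-scattering measure has no atom there and $\alpha_2$ is absolutely continuous. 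Hence their contribution to $\int_O$ vanishes as $O\downarrow\{(x_1,v_1)\}$, yielding \eqref{bal_estimate1} for $n\ge2$.

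For \eqref{bal_estimate2} (with $n\ge3$) I would first use the same change of variables to identify, for fixed $(x',v')$, the total variation on $\Gamma_+$ of the single-scattering measure with $\int_0^{\tau_+(x',v')}\int_{S}F(x',\cdot,w)|k|\,d\omega\,dt$, so that $\alpha_1-\tilde\alpha_1$, supported on the common $n$-dimensional set $\Sigma\subset\Gamma_+$ (which is $d\mu$-null since $n<2n-2$), has total variation $\int_0^{\tau_+}\int_S|Fk-\tF\tilde k|\,d\omega\,dt$. To extract it I would pair $(\A-\tA)\phi_\eps$ with a test function $\psi$, $\|\psi\|_\infty\le1$, equal to the sign of $(\alpha_1-\tilde\alpha_1)(\cdot,x_0',v_0')$ on $\Sigma$ and supported in a thin neighborhood $N$ of $\Sigma\setminus O$ disjoint from $(x_1,v_1)$. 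As $\eps\to0$ the ballistic term drops out (since $\psi=0$ near $(x_1,v_1)$), the single-scattering term produces $\int_{\Sigma\setminus O}|Fk-\tF\tilde k|$, and the absolutely continuous part contributes at most $\int_N|\alpha_2-\tilde\alpha_2|(\cdot,x_0',v_0')\,d\mu$, which tends to $0$ as $|N|\to0$ because $\Sigma$ is null; sending $O\downarrow\{(x_1,v_1)\}$ then gives $\int_0^{\tau_+}\int_S|Fk-\tF\tilde k|\,d\omega\,dt\le\|\A-\tA\|$. Finally, writing $F(k-\tilde k)=(Fk-\tF\tilde k)+(\tF-F)\tilde k$ and using $F>0$, $\tilde k\ge0$ gives the pointwise bound $F|k-\tilde k|\le|Fk-\tF\tilde k|+\|F-\tF\|_\infty\tilde k$, whose integral is exactly \eqref{bal_estimate2}.

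The main obstacle is the measure-theoretic separation underlying \eqref{bal_estimate2}: justifying that in the $L^1$ limit the singular single-scattering contribution, concentrated on the null set $\Sigma$, can be isolated from the absolutely continuous double-scattering $\alpha_2$ and from the ballistic delta without loss, and that the smoothed norms converge to the advertised total variations for a.e.\ $(x_0',v_0')$. This rests on the geodesic-flow invariance of $d\mu$ to make the change of variables $(t,w)\leftrightarrow(x,v)$ and its Jacobian explicit, together with the Lebesgue-point convergence of Lemma~\ref{app_id_cor} applied to the $\Gamma_-$-functions $(x',v')\mapsto\int_{\Gamma_+}|\alpha_j-\tilde\alpha_j|\,d\mu$; the delicate point is controlling the single-scattering mass of $\phi_\eps$ escaping $N$ during the smoothing, uniformly enough to take the limits in the order $\eps\to0$, $|N|\to0$, $O\downarrow\{(x_1,v_1)\}$. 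The restriction $n\ge3$ enters precisely here, since for $n=2$ the set $\Sigma$ is no longer $d\mu$-null and $\alpha_1,\alpha_2$ are both genuine functions, so the separation fails and $k$ must instead be handled through the $\|\cdot\|_*$ norm.
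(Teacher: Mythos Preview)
Your proposal is correct and follows essentially the same strategy as the paper: probe $\A-\tA$ with the approximate identities $\phi_\eps$ of Lemma~\ref{app_id_cor}, use the singular decomposition of Proposition~\ref{albedo kernel 3}, and isolate the ballistic and single-scattering contributions by suitable localization in $\Gamma_+$. The only cosmetic differences are that the paper works throughout by duality with a fixed test function $\phi\in L^\infty(\Gamma_+)$ (so that the $\eps\to0$ limit reduces, term by term, to Lemma~\ref{app_id_cor} applied to explicit $L^\infty(\Gamma_-)$ integrands, avoiding any discussion of weak limits of measures), and for \eqref{bal_estimate2} it takes $\phi_m=\text{sgn}(k-\tilde k)$ after first splitting $Fk-\tF\tilde k=F(k-\tilde k)+(F-\tF)\tilde k$, whereas you take $\psi=\text{sgn}(Fk-\tF\tilde k)$ and apply the triangle inequality afterward; both yield \eqref{bal_estimate2}.
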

\begin{proof}
  Let $(x_0',v_0')\in\Gamma_-$ be arbitrarily fixed and let
  $\phi_{\eps,x_0',v_0'}\in L^1(\Gamma_-)$ be defined as in Corollary
  \ref{app_id_cor}. To simplify the formulas, since $(x_0',v_0')$ is fixed,
  in the following we drop this dependence from the notation
  $\phi_\eps=\phi_{\eps,x_0',v_0'}$.

  Let $\A=\A_0+\A_1+\A_2$ be the decomposition of the albedo operator given
  by
  \begin{align*}
    \A_if(x,v)=\int_{\Gamma_-}\alpha_i(x,v,x',v')f(x',v')\:d\mu(x',v'),
    \quad i=0,1,2,
  \end{align*}
  where $\alpha_i$, $i=0,1,2$ are the Schwartz kernels in Proposition
  \ref{albedo kernel 3}.

  Let $\phi\in L^\infty(\Gamma_+)$ with $\|\phi\|_\infty\leq 1$. Since
  $\|\phi_\eps\|_{L^1(\Gamma_-)}=1$, the mapping properties of the albedo
  operator imply that
  \begin{align}
    \label{original_estimate}
    \left|\int_{\Gamma_+}\phi(x,v)[\A-\tA]\phi_\eps(x,v)\:d\mu(x,v)\right|
    \leq\|\A-\tA\|.
  \end{align}
  Next we evaluate each of the three terms in
  $\int_{\Gamma_+}\phi(x,v)[\A-\tA]\phi_\eps(x,v)\:d\mu(x,v)$ by using the
  decomposition in Proposition \ref{albedo kernel 3} and Fubini's theorem.

  The first term is evaluated using the formula \eqref{alpha0}:
  \begin{align*}
    I_0(\phi,\eps)
    :=\int_{\Gamma_+}\phi(x,v)[\A_0-\tA_0]\phi_\eps(x,v)\:d\mu(x,v)
    =\int_{\Gamma_-}\phi\bigl(\geov{x',v'}{\tau_+(x',v')}\bigr)
    \phi_\eps(x',v')\\
    \times\Bigl[e^{-\int_0^{\tau_+(x',v')}a(\geov{x',v'}{s})\:ds}-
    e^{-\int_0^{\tau_+(x',v')}\tilde{a}(\geov{x',v'}{s})\:ds}\Bigr]
    \:d\mu(x',v').
  \end{align*}
  Since the integrand above is in $L^\infty(\Gamma_-)$ by applying
  \eqref{app_id_limit}, we get for almost every $(x_0',v_0')\in\Gamma_-$
  \begin{align}
    I_0(\phi)&(x_0',v_0')
    :=\lim_{\eps\to0}I_0(\phi,\eps)\nonumber\\
    &=\phi\bigl(\geov{x_0',v_0'}{\tau_+(x_0',v_0')}\bigr)
    \Bigl(e^{-\int_0^{\tau_+(x_0',v_0')}a(\geov{x_0',v_0'}{s})\:ds}-
    e^{-\int_0^{\tau_+(x_0',v_0')}\tilde{a}(\geov{x_0',v_0'}{s})\:ds}\Bigr).
    \label{term1_lim}
\end{align}

To evaluate the second term we use the formula \eqref{alpha1} and let
$y=y(x',v',t)=\geo{x',v'}{t}$:
\begin{align*}
  I_1(\phi,\eps)
  &:=\int_{\Gamma_+}\phi(x,v)[\A_1-\tA_1]\phi_\eps(x,v)\:d\mu(x,v)
  \nonumber\\
  &=\int_{\Gamma_-}\phi_\eps(x',v')\:d\mu(x',v')
  \Bigl\{\int_0^{\tau_+(x',v')}\int_{S_yM}
  \phi\bigl(\geov{y,w}{\tau_+(y,w)}\bigr)\nonumber\\
  &\qquad\qquad\times
  \left[F(x',y,w)k(y,\dot y,w)-\tF(x',y,w)\tk(y,\dot y,w)\right]
  \:dw\:dt\Bigr\}.
\end{align*}
Apply again \eqref{app_id_limit} for the continuous integrand above to
obtain for almost every $(x_0',v_0')\in\Gamma_-$: with $y=y(x_0',v_0',t)$
\begin{align}
  I_1(\phi)(x_0',v_0')
  &:=\lim_{\eps\to0}I_1(\phi,\eps)\nonumber\\
  &\phantom{:}=\int_0^{\tau_+(x_0',v_0')}\int_{S_yM}
  \phi\bigl(\geov{y,w}{\tau_+(y,w)}\bigr)\nonumber\\
  &\qquad\qquad\times
  \bigl[F(x_0',y,w)k(y,\dot y,w)-\tF(x_0',y,w)\tk(y,\dot y,w)\bigr]
  \:d\omega_y(w)\:dt,\label{term2_lim}
\end{align}
or $I_1(\phi)=I_{1,1}(\phi)+I_{1,2}(\phi)$ with
\begin{align}
  I_{1,1}(\phi)
  &=\int_0^{\tau_+(x_0',v_0')}\int_{S_yM}
  \phi\bigl(\geov{y,w}{\tau_+(y,w)}\bigr)
  F(x_0',y,w)(k-\tk)(y,\dot y,w)\:d\omega_y(w)\:dt,\label{term21_lim}\\
  |I_{1,2}(\phi)|
  &\leq\int_0^{\tau_+(x_0',v_0')}\int_{S_yM}
  |F-\tF|(x_0',y,w)\tk(y,\dot y,w)\:d\omega_y(w)\:dt\label{term22_lim}.
\end{align}
Consider the third term
\begin{align*}
  I_2(\phi,\eps)
  &=\int_{\Gamma_+}\phi(x,v)[\A_2-\tA_2]\phi_\eps(x,v)\:d\mu(x,v)\\
  &=\int_{\Gamma_-}\phi_\eps(x',v')\:d\mu(x',v')
  \Bigl\{\int_{\Gamma_+}\phi(x,v)(\alpha_2-\tilde{\alpha}_2)(x,v,x',v')
  \:d\mu(x,v)\Bigr\}.
\end{align*}
By \eqref{alpha2}, the map $(x',v')\mapsto\int_{\Gamma_+}
\phi(x,v)(\alpha_2-\tilde{\alpha}_2)(x,v,x',v')\:d\mu(x,v)$ is in
$L^\infty(\Gamma_-)$, and then, by \eqref{app_id_limit}, we get for almost
every $(x_0',v_0')\in\Gamma_-$
\begin{align}
  I_2(\phi)(x_0',v_0')
  :=\lim_{\eps\to 0}I_2(\phi,\eps)
  =\int_{\Gamma_+}\phi(x,v)(\alpha_2-\tilde{\alpha}_2)(x,v,x_0',v_0')
  \:d\mu(x,v).\label{term3_lim}
\end{align}
The left hand side of \eqref{original_estimate} has three terms. We
move the third term to the right hand side (with absolute values)
and take the limit with $\eps\to 0$ to get
\begin{align}
  \label{original_estimate2}
  |I_0(\phi)+I_1(\phi)|(x_0',v_0')
  \leq \|\A-\tA\|+I_2(|\phi|)(x_0',v_0'),~a.e.~(x_0',v_0')\in\Gamma_-,
\end{align}
for any $\phi\in L^\infty(\Gamma_+)$ with $\|\phi\|_\infty=1$.

We note that the negligible set on which the inequality above does not hold
may depend on $\phi$.  We will consider a countable sequence of functions
$\phi$, and since the countable union of negligible sets is negligible, the
inequality \eqref{original_estimate2} holds almost everywhere on
$\Gamma_-$, independently of the term in the sequence. This justifies the
argument below for almost every $(x_0',v_0')$ in $\Gamma_-$.

In \eqref{original_estimate2}, we shall choose two sequences of $\phi$ to
conclude the two estimates of the lemma.  First we show the estimate
\eqref{bal_estimate1} by choosing $\phi_m\in L^\infty(\Gamma_+)$ which are
1 in a shrinking neighborhood of
$(x_0,v_0):=\geov{x_0',v_0'}{\tau_+(x_0',v_0')}$.  First, define
$\phi_m(x_0,v)$ to be the indicator function for the set $\{v\in
S_{x_0}M\st \|v-v_0\|_{g(x_0)}<1/m\}$, then extend $\phi_m$ by
\begin{align*}
  \phi_m(x,v)=
  \begin{cases}
    0&\text{if }d_{\p M}(x,x_0)\geq1/m,\\
    \phi_m(x,\P(v;x,x_0))&\text{if }d_{\p M}(x,x_0)<1/m
  \end{cases}
\end{align*}
where $\P(v;x,x_0)$ is the parallel transport of $v\in S_xM$ from $x$ to
$x_0$. Then \eqref{term1_lim} gives
\begin{align*}
  I_0(\phi_m)=e^{-\int_0^{\tau_+(x_0',v_0')}a(\geov{x_0',v_0'}{s})\:ds}-
  e^{-\int_0^{\tau_+(x_0',v_0')}\tilde{a}(\geov{x_0',v_0'}{s})\:ds}
\end{align*}
independently of $m$. From \eqref{term2_lim} we have $\lim_{m\to
  \infty}I_1(\phi_m)=0$ since for any $t$, the support of
$\phi\bigl(\geov{y(t),w}{\tau_+(y,w)}\bigr)$ in $w\in S_{y(t)}M$
shrinks to $\dot y(t)$. From \eqref{term3_lim} we also have
$\lim_{m\to\infty}I_2(|\phi_m|)=0$, since the support shrinks to one
point. We use here the corollary of \eqref{alpha2} that
$(\alpha_2-\tilde \alpha_2)(\cdot,\cdot,x_0',v_0')\in
L^1(\Gamma_+)$, for a.e. $(x_0',v_0')\in\Gamma_-$.

Next we prove the estimate \eqref{bal_estimate2}.  Recall that now
$n\geq3$.  For $m>0$, let $N_{(x_0',v_0'),q}\subset \overline{M}$ be the
tubular neighborhood of the geodesic $y(t)=\geo{x_0',v_0'}{t}$, $0\leq
t\leq \tau_+(x_0',v_0')$, of radius $1/m$.  We now define a sequence
$\phi_m\in L^\infty(\Gamma_+)$.  Set $\phi_m(x,v)=0$ if $x\in
N_{(x_0',v_0'),q}$; note that $I_0(\phi_m)=0$ for all $m$.  For
$(x,v)\in\Gamma_+$ with $x\not\in N_{(x_0',v_0'),q}$, $\phi_q(x,v)=0$ if
the geodesic $z(s)=\geo{x,v}{s}$, $-\tau_-(x,v)\leq s\leq 0$ does not
intersect $N_{(x_0',v_0'),q}$.  When $\geo{x,v}{\cdot}$ does intersect
$N_{(x_0',v_0'),q}$, let $0\leq t(x,v)\leq \tau_+(x_0',v_0')$ and
$-\tau_-(x,v)\leq s(x,v)\leq 0$ be such that
\begin{align*}
  d_g\bigl(y(t(x,v)),z(s(x,v))\bigr)
  =\min_{s,t}\{d_g(y(t),z(s)\}
\end{align*}
and define
\begin{align*}
  \phi_m(x,v)=\text{sgn}(k-\tk)\bigl(y(t(x,v)),\dot y(t(x,v)),
  \P\bigl(\dot z(s(x,v));z(s(x,v)),y(t(x,v))\bigr)\bigr).
\end{align*}
Notice that when $(x,v)$ is of the form $\geov{y(t),w}{\tau_+(y(t),w)}$,
$w\in S_{y(t)}M$, that is $(x_0',v_0')$ and $(x,v)$ are the beginning and
end of a single-scattering broken geodesic, $\phi_m(x,v)$ takes the sign of
$k-\tilde k$ at the point of scattering.  Note also that the support of
$\phi_m$ shrinks to a negligible set in $\Gamma_+$ as $m\to\infty$ since
$n\geq3$.

Now apply the estimate \eqref{original_estimate2} to $\phi_m$ and use
$I_0(\phi_m)=0$ to get
\begin{align*}
  |I_{1,1}(\phi_m)|(x_0',v_0')
  \leq\|\A-\tA\|+I_2(|\phi_m|)(x_0',v_0')+|I_{1,2}(\phi_m)|(x_0',v_0').
\end{align*}
Since the support of $\phi_m$ shrinks to a set of measure zero in
$\Gamma_+$ as $m\to\infty$, we get for almost every
$(x_0',v_0')\in\Gamma_-$, $\lim_{m\to\infty}I_3(|\phi_m|)(x_0',v_0')=0$.
Finally, noting that $|I_{1,1}(\phi_m)|=I_{1,1}(\phi_m)$ and applying
\eqref{term22_lim}, from \eqref{term21_lim} we obtain for almost every
$(x_0',v_0')\in\Gamma_-$
\begin{align*}
  &\int_0^{\tau_+(x_0',v_0')}\int_{S_{y(t)}M}
  |k-\tilde{k}|(y(t),\dot y(t),w)
  E(x_0',y)E\bigl(y(t),\geo{y(t),w}{\tau_+(y(t),w)}\bigr)\:d\omega_y(w)\:dt\\
  &\ =\lim_{m\to \infty}I_{1,1}(\phi_m)
  \leq \|\A-\tA\|+
  \int_0^{\tau_+(x_0',v_0')}\int_{S_yM}
  |F-\tF|(x_0',y,w)\tk(y,\dot y,w)\:d\omega_y(w)\:dt.
\end{align*}
The estimate \eqref{bal_estimate2} in the theorem follows.
\end{proof}

\section{Stability modulo gauge transformations}
In this section we prove Theorem \ref{main_thm}.

We start with two pairs $(a,k),(\ta,\tk)\in U_{\Sigma,\rho}$ and let
$$\eps:=\norm\A-\tA\norm.$$ We shall find an intermediate pair
$(a',k')\sim (a,k)$ such that \eqref{closeness_in_a} and
\eqref{closeness_in_k} hold.

Define first the ``trial'' gauge transformation:
\begin{align}
  \label{trial_gauge}
  \varphi(x,v)
  :=e^{-\int_0^{\tau_-(x,v)}(\ta-a)(\geov{x,v}{s-\tau_-(x,v)})\:ds},
  \quad a.e.\ (x,v)\in \overline {SM}.
\end{align}
Then $\varphi>0$, $\varphi|_{\Gamma_-}=1$, $\D\varphi(x,v)\in
L^\infty(SM)$ and
\begin{align}\label{a1-a}
  \ta(x,v)=a(x,v)-\D\log\varphi(x,v).
\end{align}
Note, however, that $\varphi|_{\Gamma_+}$ is not equal to 1. We begin by
estimating $\varphi|_{\Gamma_+}$.  By \eqref{bal_estimate1}, we have for
almost every $(x_0',v_0')\in\Gamma_-$
\begin{align*}
  \Bigl|e^{-\int_0^{\tau_+(x_0',v_0')}a(\geov{x_0',v_0'}{s})\:ds}-
  e^{-\int_0^{\tau_+(x_0',\theta_0')}\tilde a(\geov{x_0',v_0'}{s})\:ds}\Bigr|
  \leq\eps.
\end{align*}
Changing variables $t=\tau_+(x_0',v_0')-s$ and denoting
$(x_0,v_0)=\geov{x_0',v_0'}{\tau_+(x_0',v_0')}$ we get
\begin{align}
  \label{trial_gauge_estimate1}
  \Bigl|
  e^{-\int_0^{\tau_-(x_0,v_0)}a(\geov{x_0,v_0}{t-\tau_-(x_0,v_0)})\:dt}-
  e^{-\int_0^{\tau_-(x_0,v_0)}\ta(\geov{x_0,v_0}{t-\tau_-(x_0,v_0)})\:dt}
  \Bigr|\leq\eps.
\end{align}
When $(x_0',v_0')$ covers $\Gamma_-$ almost everywhere we get
$(x_0,v_0)$ covers $\Gamma_+$ almost everywhere.

By the Mean Value theorem applied to $u\mapsto e^{-u}$ we obtain the lower
bound
\begin{align}
  &\Bigl|
  e^{-\int_0^{\tau_-(x_0,v_0)}a(\geov{x_0,v_0}{t-\tau_-(x_0,v_0)})\:dt}-
  e^{-\int_0^{\tau_-(x_0,v_0)}\ta(\geov{x_0,v_0}{t-\tau_-(x_0,v_0)})\:dt}
  \Bigr|\nonumber\\
  &\qquad=e^{-u_0}\Bigl|\int_0^{\tau_-(x_0,v_0')}
  (\ta-a)\bigl(\geov{x_0,v_0}{t-\tau_-(x_0,v_0)}\bigr)\:dt\Bigr|
  =e^{-u_0}|\log\varphi(x_0,v_0)|\nonumber\\
  &\qquad
  \geq e^{-\diam(M)\Sigma}|\log\varphi(x_0,v_0)|
  \label{trial_gauge_estimate2}
\end{align}
where $u_0=u_0(x_0,v_0,a,\tilde{a})$ is a value between the two integrals
appearing in the exponents in the left hand side above, and $\varphi$ is
defined in \eqref{trial_gauge}.

From \eqref{trial_gauge_estimate1} and \eqref{trial_gauge_estimate2} we get
the following estimate for the ``trial'' gauge $\varphi$:
\begin{align}
  \label{trial_gauge_less_epsilon}
  |\log\varphi(x,v)|\leq
  e^{\diam(M)\Sigma}\eps,~a.e.~(x,v)\in\Gamma_+.
\end{align}

The ``trial'' gauge $\varphi$ is not good enough since it does not
equal 1 on $\Gamma_+$. We alter it to some $\tilde{\varphi}\in
L^\infty(SM)$ with $\D\log\tilde{\varphi}\in L^\infty(SM)$ in such a
way that $\tilde{\varphi}|_{\p SM}=1$. More precisely, for almost
every $(x,\theta)\in\overline{SM}$, we define $\tilde{\varphi}(x,v)$
by
\begin{align}\label{def_gauge}
  \log\tilde{\varphi}(x,v):=\log\varphi(x,v)-
  \frac{\tau_-(x,v)}{\tau(x,v)}\log\varphi\bigl(\geov{x,v}{\tau_+(x,v)}\bigr).
\end{align}
Since $0\leq\tau_-(x,v)/\tau(x,v)\leq 1$ we get $\tilde{\varphi}\in
L^\infty(SM)$, and clearly $\tilde{\varphi}|_{\p SM}=1$.  Since
$\D\tau(x,v)=\D\log\varphi\bigl(\geov{x,v}{\tau_+(x,v)}\bigr)=0$ and
$\D\tau_-(x,v)=1$,
\begin{align}
  \label{closeness_varphi}
  \D\log\tilde{\varphi}(x,v)=\D\log\varphi(x,v)-
  \frac{\log\varphi\bigl(\geov{x,v}{\tau_+(x,v)}\bigr)}{\tau(x,v)}
  \in L^\infty(SM).
\end{align}
Define now the pair $(a',k')$ in the equivalence class of $\langle
a,k\rangle$ by
\begin{align}
  a'(x,v)&:=a(x,v)-\D\log\tilde{\varphi}(x,v),\label{tildea-a}\\
  k'(x,v',v)&:=\frac{\tilde{\varphi}(x,v)}{\tilde{\varphi}(x',v')}
  k(x,v',v).\label{tildek-k}
\end{align}
Now $\A'$, the albedo operator corresponding to $(a',k')$,
satisfies $\A'=\A$, and
\begin{align*}
\norm\A'-\tA\norm=\norm\A-\tA\norm=\eps.
\end{align*}

Next we compare the pairs $(a',k')$ with $(\ta,\tk)$ and show them
to satisfy \eqref{closeness_in_a} and \eqref{closeness_in_k}.  Using
the definitions \eqref{a1-a}, \eqref{tildea-a}, the relation
\eqref{closeness_varphi}, and the estimate
\eqref{trial_gauge_less_epsilon} for $\varphi$ on $\Gamma_+$, we
have for almost every $(x,v)\in SM$:
\begin{align}
  |\ta(x,v)-a'(x,v)|
  &=\bigl|[\ta-a](x,v)+[a-a'](x,v)\bigr|\nonumber\\
  &=|\D\log\tilde{\varphi}(x,v)-\D\log{\varphi}(x,v)|\nonumber\\
  &=\frac{\bigl|\log\varphi
    \bigl(\geov{x,v}{\tau_+(x,v)}\bigr)\bigr|}{\tau(x,v)}
  \leq\eps\frac{e^{\diam(M)\Sigma}}{\tau(x,v)}.\label{a1-tildea}
\end{align}
Since the coefficients are supported away from $\partial M$ (by
construction of $M$) such that \eqref{nontangential} holds,
following \eqref{a1-tildea} we obtain the estimate
\eqref{closeness_in_a} in the form
\begin{align}
  \label{final_estimate_for_a}
  \|\ta-a'\|_\infty\leq \eps\frac{ e^{\diam(M)\Sigma}}{c_0},
\end{align}
with $c_0$ from \eqref{nontangential}.

Up to this point, all the arguments above also work for two
dimensional domains. Next we prove the estimate
\eqref{closeness_in_k}. These arguments are specific to three or
higher dimensions. Recall the formula \eqref{F defn} adapted to
$a'$: let $x'\in\p M$, $y\in M$ and $w\in S_yM$ and let $v'\in
S_{x'}M,t>0$ be such that $y=\geo{x',v'}{t}$.  Then from
\eqref{final_estimate_for_a},
\begin{align*}
  |a'(x,v)|\leq \eps\frac{ e^{\diam(M)\Sigma}}{\tau(x,v)}+\Sigma,
  \qquad\text{and}\qquad
  \|a'\|_\infty\leq \eps\frac{ e^{\diam(M)\Sigma}}{c_0}+\Sigma,
\end{align*}
so (using the first, and the fact that $\tau$ is constant along geodesics),
\begin{align}
  |F'(x',y,w)|
  &=e^{-\int_0^ta'(\geov{x',v'}{s})\:ds}
  e^{-\int_0^{\tau_+(y,w)}a'(\geov{y,w}{s})\:ds}\nonumber\\
  &\geq
  \exp\bigl(-2(\varepsilon e^{\diam(M)\Sigma}+\diam(M)\Sigma\bigr).
  \label{lower_bound_E}
\end{align}
Using the non-negativity of $\ta$ and $a'$ we estimate
\begin{align}
  |[\tF-F']&(x',y,w)|
  \leq \Bigl|e^{-\int_0^t\ta(\geov{x',v'}{s})\:ds}
  -e^{-\int_0^t\ta(\geov{x',v'}{s})\:ds}\Bigr|\nonumber\\
  &\qquad\qquad\qquad
  +\Bigl|e^{-\int_0^{\tau_+(y,w)}a'(\geov{y,w}{s})\:ds}
  -e^{-\int_0^{\tau_+(y,w)}a'(\geov{y,w}{s})\:ds}\Bigr|\nonumber\\
  &\leq\Bigl|\int_0^t [\ta-a'](\geov{x',v'}{s})\:ds\Bigr|+
  \Bigl|\int_0^{\tau_+(y,w)}[\ta-a'](\geov{y,w}{s})\:ds\Bigr|\nonumber\\
  &\leq \eps e^{\diam(M)\Sigma}\Bigl(\frac{t}{\tau(x',v')}
  +\frac{\tau_+(y,w)}{\tau(y,w)}\Bigr)
  \leq 2\eps e^{\diam(M)\Sigma}
  \label{F-F}
\end{align}
by \eqref{a1-tildea}.  We now apply the lower bound for $F'$ in
\eqref{lower_bound_E}, the upper bound for $\|\tilde F-F'\|_\infty$ from
\eqref{F-F} and the hypothesis $\|\tk\|_{\infty,1}\leq \rho$ to the estimate
\eqref{bal_estimate2} with respect to the pairs $(a',k')$ and $(\ta,\tk)$.
With $y(t)=\geo{x_0',v_0'}{t}$, we obtain
\begin{align*}
  &\int_0^{\tau_+(x_0',v_0')}\int_{S_{y(t)}M}
  |k-\tilde{k}|(y(t),\dot y(t),w)\:d\omega_y(w)\:dt\\
  &\qquad\leq \varepsilon
  \bigl(1+2\diam(M)\rho\omega_{n-1}e^{\diam(M)\Sigma}\bigr)
  \exp\Bigl(2\diam(M)
  \bigl(\eps\frac{ e^{\diam(M)\Sigma}}{c_0}+\Sigma\bigr)\Bigr)\\
  &\qquad=\varepsilon\:C_1,\quad\text{say.}
\end{align*}
Finally, integrating the formula above in $(x_0',v_0')\in\Gamma_-$ with the
measure $d\mu(x_0',v_0')$, we get
\begin{align*}
  \|\tk-k'\|_1\leq \eps \text{Vol}(\p M)\omega_{n-1}C_1.
\end{align*}
Theorem \ref{main_thm} holds now with $C=\max\{\text{Vol}(\p
M)\omega_{n-1}C_1,e^{\diam(M)\Sigma}/c_0\}$.

\section{Stability of the equivalence classes in two dimensions}
We prove here Theorem \ref{main_thm_2d}, making use of the results of
\cite{Mc05}.

Let $(a,k),(\ta,\tk)\in V_{\Sigma,\rho}$ be given with $\|\A-\tA\|_*=\eps$.
As before, define the pair $(a',k')$ in the equivalence class of $\langle
a,k\rangle$ by \eqref{tildea-a} and \eqref{tildek-k}.  Then the
corresponding albedo operator $\A'=\A$ and, thus,
\begin{align}
  \label{beta-beta}
  \|\A'-\tA\|_*=\|\A-\tA\|_*=\eps, \quad\text{ and }\quad
  \|(\tilde{\beta}-\beta')|\sin\psi|\|_\infty\leq\eps.
\end{align}
The estimate \eqref{final_estimate_for_a} holds as in the case $n\geq3$.
Now
\begin{align}
  \label{quotient_varphi-pre}
  \frac{\tilde{\varphi}(x,v)}{\tilde{\varphi}(x',v')}
  =e^{-\int_0^{\tau_-(x,v)}(a'-a)(\geov{x,v}{s-\tau_-(x,v)})\:ds
    +\int_0^{\tau_-(x',v')}(a'-a)(\geov{x',v'}{s-\tau_-(x',v')})\:ds}.
\end{align}
From \eqref{trial_gauge_less_epsilon} and \eqref{a1-tildea}, with
$y(s)=\geo{x,v}{s-\tau_-(x,v)}$,
\begin{align*}
  \Bigl|\int_0^{\tau_-(x,v)}(a'-a)(y,\dot y)\:ds\Bigr|
  &\leq
  \int_0^{\tau_-(x,v)}(|a'-\tilde a|+|\tilde a-a|)(y,\dot y)\:ds\\
  &\leq
  \int_0^{\tau_-(x,v)}
  \varepsilon\frac{e^{\diam(M)\Sigma}}{\tau(y,\dot y)}\:ds
  +\varepsilon e^{\diam(M)\Sigma}
  \leq 2\varepsilon e^{\diam(M)\Sigma};
\end{align*}
the same estimate holds for the second exponent in
\eqref{quotient_varphi-pre}.  Thus, from the definition \eqref{tildek-k}
and \eqref{quotient_varphi-pre} we obtain
\begin{align}
  \label{rho'}
  \frac{\tilde{\varphi}(x,v)}{\tilde{\varphi}(x',v')}\leq
  \exp\bigl\{4\varepsilon e^{\diam(M)\Sigma}\bigr\}
  \implies
  \|k'\|_\infty
  \leq \rho\exp\bigl\{4\varepsilon e^{\diam(M)\Sigma}\bigr\}.
\end{align}

Let
\begin{align*}
 \tilde E_1(y,w',w)
 :=\tilde E\bigl(\geo{y,w'}{-\tau_-(y,w')},y\bigr)
 \tilde E\bigl(y,\geo{y,w}{\tau_+(y,w)}\bigr)
\end{align*}
be the total attenuation along the broken geodesic due to one scattering at
$(y,w',w)\in S^2M$.  Then \eqref{lower_bound_E} and \eqref{F-F} say
\begin{align}
  |E'_1(y,w',w)|
  &\geq
  \exp\bigl(-2(\varepsilon e^{\diam(M)\Sigma}+\diam(M)\Sigma)\bigr)
  =C_1,\text{ say, and}
  \label{estimate_on_F}\\
  \|\tilde E_1-E_1'\|&\leq 2\eps e^{\diam(M)\Sigma}.
  \label{estimate_on_F-F}
\end{align}

The terms $\alpha_j$ in the expansion of the albedo kernel in Proposition
\ref{albedo kernel 2} are the traces of distributions $\phi_j$ defined on
$SM\times\Gamma_-$; see \cite{Mc05}.  The $\phi_j$ are the kernels of the
operators $J$, $KJ$ and $(I-K)^{-1}K^2J$ ($j=0,1,2$, respectively) where
\begin{align*}
  Jf_-(x,v)&=E(\geo{x,v}{-\tau_-(x,v)},x)f_-(\geov{x,v}{\tau_-(x,v)}),\\
  Kf(x,v)&=\int_0^{\tau_-(x,v)}E\bigl(x,\geo{x,v}{t-\tau_-(x,v)}\bigr)
  T_1f\bigl(\geov{x,v}{t-\tau_-(x,v)}\bigr)\:dt,\text{ with}\\
  T_1f(x,v)&=\int_{S_xM}k(x,v',v)f(x,v')\:d\omega_x(v').
\end{align*}
Let $\gamma$ be the trace operator on $L^\infty(\Gamma_+)$, which is shown
in \cite{Mc05} to be well-defined.

Let $(x,v,x',v')\in\Gamma_+\times\Gamma_-$ be such that the
geodesics $\geo{x,v}{\cdot}$ and $\geo{x',v'}{\cdot}$ intersect at
$(y,w',w)\in S^2M$.  By Proposition \ref{albedo kernel 2} above,
\begin{align*}
  E_1'(\tk-k')
  &=(E_1'-\tilde E_1)\tilde k+(\tilde E_1\tilde k-E_1'k')\\
  &=(E_1'-\tilde E_1)\tk+(\tilde \beta-\beta')|\sin\psi|+
  (\gamma\phi_2'-\gamma\tilde \phi_2)|\sin\psi|,
\end{align*}
and so by \eqref{beta-beta}, \eqref{rho'}, \eqref{estimate_on_F} and
\eqref{estimate_on_F-F},
\begin{align}
  \label{foundation}
  C_1|\tk-k'|&\leq 2\varepsilon e^{\diam(M)\Sigma}\rho+\varepsilon
  +|\gamma\phi_2'-\gamma\tilde \phi_2||\sin\psi|.
\end{align}
Now
\begin{align}
  |\gamma\phi_2'-\gamma\phi_2|
  &=\gamma(I-K')^{-1}K'^2\phi_0'
  -\gamma(I-\tilde K)^{-1}\tilde K^2\tilde \phi_0\nonumber\\
  &=\gamma(I-K')^{-1}\bigl(K'^2\phi_0'-\tilde K^2\tilde \phi_0\bigr)
  +\gamma(I-\tilde K)^{-1}(K'-\tilde K)(I-K')^{-1}\tilde K^2\tilde \phi_0
  \nonumber\\
  &=\gamma(I-K')^{-1}\bigl(K'^2-\tilde K^2\bigr) \phi'_0
  +\gamma(I-K')^{-1}\tilde K^2(\phi'_0-\tilde \phi_0)\nonumber\\
  &\qquad
  +\gamma(I-\tilde K)^{-1}(K'-\tilde K)(I-K')^{-1}\tilde K^2\tilde \phi_0.
  \label{estimate_alpha2}
\end{align}
Lemma 9 of \cite{Mc05} estimates the first of these terms:
\begin{align}
  \label{piece 1}
  \|\gamma(I-K')^{-1}
  \bigl(K'^2-\tilde K^2\bigr)\phi'_0\|_\infty
  \leq C_2\|k'-\tilde k\|_\infty(\|k'\|_\infty+\|\tilde k\|_\infty)
  (1-\log|\sin\psi|).
\end{align}
For the second we appeal to Proposition 7, and its proof, in \cite{Mc05}.
Instead of using the estimate $\|E\|_\infty\leq 1$, we use \eqref{F-F};
together with the fact that $(I-K')^{-1}$ preserves $L^\infty$, we readily
obtain
\begin{align}
  \label{piece 2}
  \|\gamma(I-K')^{-1}\tilde K^2(\phi'_0-\tilde \phi_0)
  \leq C_3\|\tilde k\|_\infty^2\|E'-\tilde E\|_\infty
  \leq C_3\rho^22\varepsilon e^{\diam(M)\Sigma}.
\end{align}
The final term in \eqref{estimate_alpha2} is estimated in \cite[Lemma
10]{Mc05}:
\begin{align}
  \label{piece 3}
  \|\gamma(I-\tilde K)^{-1}(K'-\tilde K)(I-K')^{-1}
  \tilde K^2\tilde \phi_0\|_\infty
  \leq C_4\|k'-\tk\|_\infty\rho^2(1+\rho).
\end{align}
Combining \eqref{foundation}, \eqref{piece 1}, \eqref{piece 2} and
\eqref{piece 3}, for a new constant $C$, we obtain
\begin{align*}
  \|k'-\tilde k\|_\infty\leq C\varepsilon+C\rho\|k'-\tilde k\|_\infty
\end{align*}
and so if $\rho<1/C$, we obtain the final estimate
\begin{align*}
  \|k'-\tilde k\|_\infty\leq
  \frac{C}{1-C\rho}\varepsilon=:\tilde{C}\varepsilon.
\end{align*}

Theorem \ref{main_thm_2d} is now proven for a constant which is the
maximum of the $\tilde{C}$ above and $\exp(\diam(M)\Sigma)/{c_0}$
(see \eqref{final_estimate_for_a}).

\section*{Acknowledgment}This work originated in discussions during
the BIRS-workshop {\em Inverse Transport Theory and Tomography},
Banff, Alberta, Canada, May 16-21, 2010.

\end{document}